\newtheorem{proposition}{Proposition}
\newtheorem{prop}{Property}
\newtheorem{rem}{Remark}
\title{A Nodal Discontinuous Galerkin Method with Rank-Adaptive Velocity Space Representation for the Multiscale BGK Model}
\author[1]{Andr\'es Galindo-Olarte}
\author[2]{Joseph Nakao}
\author[3]{Mirjeta Pasha}
\author[4]{Jing-Mei Qiu}
\author[5]{William Taitano}
\affil[1]{Oden Institute for Computational Engineering and Sciences, The University of Texas at Austin, Austin, TX, USA\newline
Email: afgalindo@utexas.edu, ORCID iD: 0009-0007-1551-0889}
\affil[2]{Department of Mathematics and Statistics, Swarthmore College, Swarthmore, PA, USA\newline
Email: jnakao1@swarthmore.edu, ORCID iD: 0009-0008-6589-4013}
\affil[3]{Department of Mathematics, Virginia Polytechnic and State University, Blacksburg, VA, USA\newline
Email: mpasha@vt.edu, ORCID iD: 0000-0003-4249-2421}
\affil[4]{Department of Mathematical Sciences, University of Delaware, Newark, DE, USA\newline
Email: jingqiu@udel.edu, ORCID iD: 0000-0002-3462-188X}
\affil[5]{Theoretical Division, Los Alamos National Laboratory, Los Alamos, NM, USA\newline
Email: taitano@lanl.gov, ORCID iD: 0000-0002-2369-0935}
\date{}
\begin{document}
\maketitle
\let\thefootnote\relax\footnotetext{\hspace*{-1.8em}\textbf{Corresponding author:} Joseph Nakao\\
\textbf{MSC2020 Numbers:} 65M60\\
\textbf{Keywords:} rank-adaptive, nodal discontinuous Galerkin, implicit-explicit, Bhatnagar-Gross-Krook (BGK) equation, multiscale\\
\textbf{Funding:} This material is based upon work supported by the National Science Foundation under Grant No. DMS-1929284 while the authors were in residence at the Institute for Computational and Experimental Research in Mathematics in Providence, RI, during the \emph{Empowering a Diverse Computational Mathematics Research Community} program. JQ gratefully acknowledges support from AFOSR grants FA9550-24-1-0254 and DOE grant DE-SC0023164. MP gratefully acknowledges support from NSF DMS-2410699. Any opinions, findings, conclusions, or recommendations expressed in this material are those of the authors and do not necessarily reflect the views of the National Science Foundation. WT was supported by Triad National Security, LLC under contract 89233218CNA000001, and the DOE Office of Applied Scientific Computing Research (ASCR) through the Mathematical Multifaceted Integrated Capability Centers program.}

\begin{abstract}
\noindent A novel hybrid algorithm is presented for the Boltzmann-BGK equation, in which a rank-adaptive decomposition is applied solely in the velocity subspace, while a full-rank representation is maintained in the physical (position) space. This approach establishes a foundation for extending modern rank-adaptive techniques to solve the Boltzmann equation in realistic settings, particularly where structured representations, such as conformal geometries, may not be feasible in practical engineering applications. A nodal discontinuous Galerkin method is employed for spatial discretization, coupled with a rank-adaptive decomposition over the velocity grid, as well as implicit-explicit Runge-Kutta methods for time integration. To handle the limit of vanishing collision time, a multiscale implicit integrator based on an auxiliary moment equation is utilized. The algorithm’s order of accuracy, reduced computational complexity, and robustness are demonstrated on a suite of canonical gas kinetics problems with increasing complexity.
\end{abstract}


\section{Introduction}
\label{sec:introduction}

The Boltzmann transport equation governs the dynamical evolution of the particle distribution function (PDF), $f\left(t, {\bf x}, {\bf v}\right)$, defined on the position ${\bf x} \in \mathbb{R}^3$ and velocity ${\bf v} \in \mathbb{R}^3$ phase space over time $t \in \mathbb{R}_+$. The equation mediates collective many-body interactions through the collision operator, driving the local PDF toward equilibrium (e.g., a Maxwellian) on a characteristic timescale $\varepsilon$. It provides a statistical bridge between microscopic (e.g., molecular dynamics) and macroscopic (e.g., Euler or Navier-Stokes) descriptions, and is therefore indispensable in modeling systems where macroscopic fidelity is insufficient and microscopic simulations are computationally prohibitive. Applications span a wide range, including rarefied gas dynamics for high-altitude reentry vehicles, photon and neutron transport in high-energy-density and astrophysical systems, and plasma modeling for semiconductor etching and thermonuclear fusion devices.

Numerical modeling of the Boltzmann equation faces significant challenges. The high-dimensional nature of the PDF leads to the \textit{curse of dimensionality}, where computational complexity scales as ${\cal O}(N^6)$, with $N$ being the number of grid points per dimension (3 dimensions in physical space, 3 dimensions in velocity space). In addition, sophisticated numerical techniques are required to satisfy the equation's numerous constraints—often integral or differential in nature—including global conservation of mass, momentum, and energy, compliance with entropy maximization (as mandated by the Boltzmann ${\cal H}$-theorem), and positivity of the distribution function. The system is also inherently multiscale, with large separations between the fast collision timescale and the slower macroscopic fluid timescales. Thus, numerical schemes must often step over the fast yet dynamically insignificant collision timescale while ensuring consistency with the asymptotic fluid limit, without resorting to costly inversion of dense integral operators.

A substantial body of literature has addressed these challenges. While a comprehensive review is beyond the scope of this work, we briefly highlight relevant approaches. Asymptotic preserving (AP) and conservative solvers have been developed, such as the micro-macro decomposition (MMD) framework in Refs. \cite{bennoune2008uniformly, gamba_jcp_mmd_2019,liu_cmp_mmd_2004, lemou_jsc_2008_mmd}, which evolves both the fluid subsystem and a perturbation to the PDF, thereby preserving conservation and asymptotic behavior in the $\varepsilon \to 0$ limit within the fluid subsystem and picking up the kinetic corrections to the viscous stress and heat flux through the perturbative component. Other methods like the high-order low-order (HOLO) method maintain a full PDF representation but introduce \textit{auxiliary} moment equations in conservative form to evaluate the collision operator more efficiently and alleviate stiffness due to small $\varepsilon$ \cite{chacon_jcp_2017_holo_review, taitano_jctt_2014}. Modern approaches have also explored low-rank tensor decomposition techniques \cite{einkemmer2025review} to combat the curse of dimensionality by factorizing the PDF into products of lower-dimensional tensors, yielding linear scaling with dimension. However, these techniques typically require structured $d$-dimensional hypercube representations of the entire PDF, which are often incompatible with unstructured data ubiquitous in realistic engineering domains.

Discontinuous Galerkin (DG) methods are a popular choice for solving kinetic models and convection-dominated problems due to their many favorable properties \cite{cockburn2000development,cockburn2001runge,hesthaven2008nodal}; they are stable, arbitrarily high-order accurate in space, $h$-$p$ adaptive, highly parallelizable, and flexible for working with complicated geometries. Furthermore, DG methods use piecewise polynomial spaces defined over the elements, which when coupled with high resolution methods \cite{shu2009high,toro2013riemann}, are better suited to deal with discontinuous solutions. Several DG methods have been developed to solve the BGK equation \cite{alekseenko2012application,alexeenko2008high,ding2021semi,xiong2015high}, a simplified relaxation model for the Boltzmann equation in the context of the kinetic description of gases originally introduced in \cite{bhatnagar1954model}; this is also known as the Boltzmann-BGK model. The methods developed in \cite{ding2021semi,xiong2015high} leverage nodal DG methods \cite{hesthaven2008nodal}, a class of DG methods that uses a Lagrangian polynomial basis defined by Gaussian quadrature nodes. Doing so simplifies to solving for the nodal values of the solution, which can be advantageous when working with kinetic solutions.

Many other methods have been developed to address the curse of dimensionality, structure-preservation, and asymptotic-preservation inherent to kinetic simulations. Multiscale numerical schemes for the BGK equation that satisfy the AP property include the works in \cite{ding2021semi,hu2018asymptotic,hu2017class}. To address high-dimensionality, several rank-adaptive and low-rank methods have been presented that exploit the underlying low-rank structures commonly enjoyed in kinetic simulations. By using rank-adaptive decompositions of the PDF, we dramatically reduce the storage and computational complexities by avoiding the storage of the entire $d$-dimensional array. Two classes of low-rank methods have seen particular attention: dynamical low-rank (DLR), and step-and-truncate (SAT) methods; we refer the reader to the thorough review paper on low-rank methods for kinetic simulations \cite{einkemmer2025review}. In the context of the BGK equation, low-rank methods include: DLR approaches \cite{baumann2024stable,dektor2024interpolatory,einkemmer2021efficient,wang2025dynamical}, a conservative sampling based adaptive rank approach \cite{sands2025adaptive}, and low-rank frameworks are utilized with HOLO and micro-macro methods \cite{hauck2024high}. Further numerical schemes combining these frameworks have been proposed for kinetic problems: asymptotic-preserving DLR methods have been developed in \cite{einkemmer2024asymptotic,einkemmer2021asymptotic,einkemmer2025asymptotic}, and conservative SAT methods were proposed in \cite{guo2024local,guo2022low,guo2024conservative}. Also worth noting is the conservative and positivity-preserving method for the multi-species BGK model in \cite{haack2023numerical}.

To address the challenges of solving the Boltzmann-BGK equation, we present a unified framework that simultaneously resolves the key issues of: 1) conservation, 2) geometric complexity and high-order accuracy, and 3) the curse of dimensionality. Numerical experiments also show that we reasonably capture the multiscale nature of the equation. Our proposed method is a \textit{hybrid} formulation for solving the Boltzmann-BGK equation, wherein rank-adaptive factorization is applied exclusively in the velocity subspace, while a full-rank structure is retained in the physical (position) space. In many practical systems, ${\bf v} \in \mathbb{R}^3$, but ${\bf x} \in \Omega_x \subset \mathbb{R}^3$, where $\Omega_x$ may contain non-conformal boundaries, holes, or other geometric complexities. These domains can often be partitioned into general polyhedral elements, $\Omega_x = \bigcup_i {\cal I}_i$, with ${\cal I}_i$ denoting an arbitrarily shaped $i^{\text{th}}$ element. Such features motivate the use of a hybrid representation. We develop an accompanying numerical scheme based on a nodal DG discretization in physical space, coupled with a uniform rank-adaptive discretization in velocity space. By doing so, the geometrical complexity encountered in the physical space can be more naturally handled while still achieving complexity reduction in the velocity space, granted by rank-adaptive factorization.

In this work, we demonstrate the proof-of-principle of the approach on a 1d2v Boltzmann-BGK equation and develop the hybrid rank-adaptive nodal DG framework that will serve as a basis for a future extension to a multidimensional setting with complex geometries. The scheme also incorporates a stiff, multiscale time integrator (e.g., implicit-explicit Runge-Kutta methods \cite{ascher1997implicit}) that leverages intermediate auxiliary fluid moment equations to permit large time steps. Furthermore, in the asymptotic limit $\varepsilon \rightarrow 0$, the proposed algorithm converges to the local Maxwellian distribution. This behavior is achieved through the exact discrete conservation of mass, momentum, and energy, facilitated by a novel adaptation of the quadrature-corrected moments (QCM) technique \cite{taitano_jcp_2017_ep_rfp}, which ensures the vanishing of the discrete moments of the collision operator. To control spurious oscillations near sharp gradients, we further employ a post-processing slope limiting procedure to control high-order polynomial modes.

The remainder of the manuscript is organized as follows: Section \ref{sec: model} introduces the model Boltzmann-BGK equation in reduced 1d2v form. Section \ref{sec: algorithm} details the numerical scheme, covering the nodal DG discretization in physical space, the auxiliary moment equations, the rank-adaptive velocity representation with adaptive rank procedures, and the IMEX time integration strategy. Section \ref{sec: numerics} presents numerical tests that demonstrate the accuracy, computational complexity, and robustness of the proposed scheme. Concluding remarks are given in Section \ref{sec: conclusion}.
\section{The model of interest}\label{sec: model}
We consider the non-dimensionalized BGK model
\begin{equation}\label{eq: BGK}
    \partial_tf + \mathbf{v}\cdot\nabla_{\mathbf{x}}f = \frac{1}{\varepsilon}(M_{\mathbf{U}}-f),
\end{equation}
where $f=f(\mathbf{x},\mathbf{v},t)$ describes the distribution of particles in phase-space $(\mathbf{x},\mathbf{v})\in\Omega_x\times\Omega_v$ and time $t>0$, $f_0(\mathbf{x},\mathbf{v})$ describes the initial distribution, and appropriate boundary conditions are prescribed. The dimensionless parameter $\varepsilon>0$ is the Knudsen number proportional to the mean-free path and can be spatially dependent. The BGK model is multiscale in nature since $0<\varepsilon\ll 1$ describes high collisionality (limiting fluid regime), and $\varepsilon=\mathcal{O}(1)$ describes lower collisionality (kinetic regime). The local Maxwellian $M_{\mathbf{U}}=M_{\mathbf{U}}(\mathbf{x},\mathbf{v},t)$ is defined by the macroscopic quantities,
\begin{equation}\label{eq: MU}
    M_{\mathbf{U}}(\mathbf{x},\mathbf{v},t) = \frac{n(\mathbf{x},t)}{(2\pi T(\mathbf{x},t))^{d/2}}\text{exp}\left(-\frac{|\mathbf{v}-\mathbf{u}(\mathbf{x},t)|^2}{2T(\mathbf{x},t)}\right),
\end{equation}
where $d \geq 1$ is the number of velocity dimensions ($\Omega_v\subset\mathbb{R}^d$), and $n$, $\mathbf{u}$, and $T$ are the number density, mean/bulk velocity, and temperature, respectively. These macroscopic quantities are obtained by taking the first few moments of the distribution function,
\begin{equation}\label{eq: U}
    \mathbf{U}(\mathbf{x},t)\coloneqq \left(n,n\mathbf{u},E\right)^T = \int_{\mathbb{R}^d}{\left(1,\mathbf{v},\frac{|\mathbf{v}|^2}{2}\right)^Tfdv},
\end{equation}
where $E(\mathbf{x},t)=(n|\mathbf{u}|^2+dnT)/2$ is the total energy, which is related to the temperature
\begin{equation}
    T(\mathbf{x},t)=\frac{1}{dn}\int_{\mathbb{R}^d}{|\mathbf{v}-\mathbf{u}|^2fdv}.
\end{equation}

The BGK equation is a relaxation model for the Boltzmann equation. As such, the BGK collision operator $Q(f)\coloneqq M_{\mathbf{U}}-f$ satisfies many well-established properties. We state two such properties here (summarized in \cite{ding2023accuracy}), and refer the reader to \cite{cercignani1988boltzmann,cercignani2000rarefied,cercignani1969mathematical} for further details and properties of the BGK operator.

\begin{prop}\label{prop: collisional_invariants}
There exist three collisional invariants for the BGK operator. In particular,
\begin{equation}
    \int_{\mathbb{R}^d}{Q(f)\phi_idv}=\int_{\mathbb{R}^d}{(M_{\mathbf{U}}-f)\phi_idv} = 0,\qquad i=1,2,3,
\end{equation}
where $(\phi_1,\phi_2,\phi_3)\coloneqq(1,\mathbf{v},|\mathbf{v}|^2/2)$. That is, the BGK operator conserves mass, momentum, and energy.
\end{prop}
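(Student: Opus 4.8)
The plan is to exploit the fact that the Maxwellian $M_{\mathbf{U}}$ is, by construction, the unique equilibrium sharing its first three velocity moments with $f$; consequently each of the three integrals reduces to a difference of matching moments and vanishes identically. Concretely, I would fix $(\mathbf{x},t)$, regard $n$, $\mathbf{u}$, $T$ as the macroscopic fields defined through the moments of $f$ in \eqref{eq: U}, and compute the corresponding moments of $M_{\mathbf{U}}$ directly via standard Gaussian integral identities, comparing against the defining moments of $f$. The only real work is verifying three elementary Gaussian integrals, so no estimates or limiting arguments are needed.

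First, for $\phi_1 = 1$ (mass), I would evaluate $\int_{\mathbb{R}^d} M_{\mathbf{U}}\, dv$ using the change of variables $\mathbf{w} = \mathbf{v} - \mathbf{u}$, which reduces it to the normalization integral $\int_{\mathbb{R}^d} e^{-|\mathbf{w}|^2/(2T)}\, dw = (2\pi T)^{d/2}$. The prefactor in \eqref{eq: MU} is chosen precisely so that this yields $n$, which equals $\int f\, dv$ by \eqref{eq: U}; hence the $i=1$ integral vanishes. Next, for $\phi_2 = \mathbf{v}$ (momentum), I would write $\mathbf{v} = \mathbf{w} + \mathbf{u}$ under the same substitution: the $\mathbf{w}$ contribution is the first moment of a centered Gaussian and vanishes by oddness, while the $\mathbf{u}$ contribution reproduces $n\mathbf{u}$ from the mass computation. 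This matches $\int \mathbf{v} f\, dv = n\mathbf{u}$ in \eqref{eq: U}, so the $i=2$ integral vanishes as well.

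The one step requiring genuine care — and the main (mild) obstacle — is the energy moment $\phi_3 = |\mathbf{v}|^2/2$, since one must reconcile the Gaussian variance with the specific algebraic form of $E$. Here I would expand $|\mathbf{v}|^2 = |\mathbf{w}|^2 + 2\,\mathbf{u}\cdot\mathbf{w} + |\mathbf{u}|^2$: the cross term vanishes by oddness, the $|\mathbf{u}|^2$ term gives $n|\mathbf{u}|^2$, and the $|\mathbf{w}|^2$ term uses the second-moment identity $\int_{\mathbb{R}^d} |\mathbf{w}|^2 e^{-|\mathbf{w}|^2/(2T)}\, dw = dT\,(2\pi T)^{d/2}$ to produce $dnT$. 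Collecting these gives $\int_{\mathbb{R}^d} \tfrac{1}{2}|\mathbf{v}|^2 M_{\mathbf{U}}\, dv = (n|\mathbf{u}|^2 + dnT)/2$, which is exactly the stated expression for $E$.

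Finally, since $E = \int_{\mathbb{R}^d} \tfrac{1}{2}|\mathbf{v}|^2 f\, dv$ by \eqref{eq: U} — equivalently, $T$ is \emph{defined} through the second central moment of $f$ — the energy moments of $M_{\mathbf{U}}$ and $f$ coincide, and the $i=3$ integral vanishes. The essential point throughout is that the three defining parameters of the Gaussian are slaved to the moments of $f$, so the result is genuinely a consistency check: the elementary Gaussian moments of $M_{\mathbf{U}}$ reproduce $(n, n\mathbf{u}, E)$ in \eqref{eq: U} by design, which is precisely the content of the collisional invariants.
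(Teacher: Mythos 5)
Your proposal is correct. Note, however, that the paper itself does not prove this statement at all: Property \ref{prop: collisional_invariants} is stated as a known fact about the BGK operator, with the reader referred to the cited literature (the summary in \cite{ding2023accuracy} and the standard references of Cercignani) for details. Your argument supplies the standard proof that those references contain: since $n$, $\mathbf{u}$, $T$ are slaved to the moments of $f$ via \eqref{eq: U}, the elementary Gaussian integrals (normalization, vanishing of odd central moments, and the second-moment identity $\int_{\mathbb{R}^d}|\mathbf{w}|^2 e^{-|\mathbf{w}|^2/(2T)}\,dw = dT(2\pi T)^{d/2}$) show that $M_{\mathbf{U}}$ reproduces exactly $(n, n\mathbf{u}, E)$, so each of the three integrals is a difference of equal quantities. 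Your handling of the one delicate point — reconciling the Gaussian variance with $E=(n|\mathbf{u}|^2+dnT)/2$, which is consistent with the paper's definition of $T$ as the second central moment of $f$ — is also correct, so the verification is complete.
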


\begin{prop}\label{prop: equilibrium}
In the limit as $\varepsilon\rightarrow 0$,
\begin{equation}
    Q(f)= M_{\mathbf{U}}-f=0\qquad\Longleftrightarrow\qquad f=M_{\mathbf{U}}[f]=M_{\mathbf{U}}[{\mathbf{U}}],
\end{equation}
where ${\mathbf{U}}$ are the moments of $f$, and $M_{\mathbf{U}}$ only depends on $f$ through its moments. Hence, along the spatial characteristics defined by $\frac{d\mathbf{x}}{dt}=\mathbf{v}$ (and so $\frac{df}{dt}=\frac{Q(f)}{\varepsilon}$), in equilibrium $f$ is the local Maxwellian distribution defined by its moments.
\end{prop}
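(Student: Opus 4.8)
The plan is to prove both directions of the equivalence $Q(f) = M_{\mathbf{U}} - f = 0 \Leftrightarrow f = M_{\mathbf{U}}[f]$, and then interpret the result along the spatial characteristics. The forward direction is essentially immediate: if $Q(f) = M_{\mathbf{U}} - f = 0$, then by definition $f = M_{\mathbf{U}}$. The only subtlety worth making explicit is that $M_{\mathbf{U}}$ depends on $f$ solely through the macroscopic moments $\mathbf{U} = (n, n\mathbf{u}, E)^T$ defined in \eqref{eq: U}, so I would write $M_{\mathbf{U}} = M_{\mathbf{U}}[\mathbf{U}]$ to emphasize this functional dependence. The reverse direction is trivial by the same definitional observation.

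\smallskip

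The conceptual heart of the statement is the consistency requirement: if $f = M_{\mathbf{U}}[f]$, one must verify that the moments $\mathbf{U}$ appearing in the Maxwellian are genuinely the moments \emph{of} $f$ itself, so that the equation is self-consistent rather than circular. The key step is to invoke Proposition \ref{prop: collisional_invariants}: since the collisional invariants satisfy $\int_{\mathbb{R}^d} (M_{\mathbf{U}} - f)\phi_i \, dv = 0$ for $\phi_i \in \{1, \mathbf{v}, |\mathbf{v}|^2/2\}$, the first three moments of $M_{\mathbf{U}}$ and $f$ coincide automatically. Hence when $f = M_{\mathbf{U}}$ at equilibrium, the density, momentum, and energy of the Maxwellian match those of $f$ by construction, and the fixed-point relation $f = M_{\mathbf{U}}[f] = M_{\mathbf{U}}[\mathbf{U}]$ is well-posed. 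I would make explicit that this is precisely what guarantees $M_{\mathbf{U}}$ is determined by the five parameters $(n, \mathbf{u}, T)$ obtained from $f$ via \eqref{eq: U}.

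\smallskip

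For the final sentence about the spatial characteristics, I would parametrize the trajectory by $\tfrac{d\mathbf{x}}{dt} = \mathbf{v}$, along which the BGK equation \eqref{eq: BGK} reduces to the ODE $\tfrac{df}{dt} = \tfrac{1}{\varepsilon}Q(f) = \tfrac{1}{\varepsilon}(M_{\mathbf{U}} - f)$. Taking the formal limit $\varepsilon \to 0$ forces $Q(f) \to 0$ (otherwise the right-hand side blows up), so $M_{\mathbf{U}} - f = 0$, and by the equivalence just established $f = M_{\mathbf{U}}[f]$, i.e.\ $f$ relaxes to the local Maxwellian determined by its own moments.

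\smallskip

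I do not anticipate a genuine obstacle here, since the proposition is essentially a restatement of the definition of $M_{\mathbf{U}}$ combined with the conservation property already established in Proposition \ref{prop: collisional_invariants}. The one point requiring care—and the step most likely to be glossed over—is the \emph{self-consistency} of the fixed-point equation $f = M_{\mathbf{U}}[f]$: one must argue that no independent constraint is needed to pin down the five moment parameters, because the collisional invariants force the moments of $M_{\mathbf{U}}$ to agree with those of $f$. I would present this carefully to avoid giving the impression of a circular definition, and otherwise keep the proof to a few lines.
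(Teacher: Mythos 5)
Your proposal is correct, but there is an important mismatch of expectations: the paper itself does not prove this Property at all. It is quoted as a well-established fact about the BGK operator, with the reader referred to the cited literature (Cercignani's monographs and the summary in the accuracy reference), so there is no in-paper proof to compare against. What you have written is the standard argument, and it is sound at the formal level at which the Property is stated. Both directions of the equivalence are indeed definitional, since $M_{\mathbf{U}}$ is by construction a functional of $f$ only through its moments \eqref{eq: U}; your emphasis on self-consistency---that Property \ref{prop: collisional_invariants} forces the moments of $M_{\mathbf{U}}[f]$ to agree with those of $f$, so the fixed-point relation $f=M_{\mathbf{U}}[f]$ is well-posed rather than circular---is exactly the right point to isolate, and it is the only nontrivial content of the equivalence. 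Two minor quibbles. First, the Maxwellian \eqref{eq: MU} is parameterized by $d+2$ scalars $(n,\mathbf{u},T)$, which is five only when $d=3$; the Property is stated for general $d$, and the paper's numerical setting has $d=2$, where it is four. Second, your final step---that $Q(f)\to 0$ as $\varepsilon\to 0$ because otherwise the right-hand side of $\frac{df}{dt}=\frac{Q(f)}{\varepsilon}$ would blow up---is a formal, Hilbert-expansion-type argument that tacitly assumes $f$ and $\frac{df}{dt}$ remain bounded uniformly in $\varepsilon$; this is consistent with the heuristic level of the statement, but it should be flagged as an assumption rather than presented as a deduction.
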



\section{The numerical scheme}\label{sec: algorithm}

In this section, we adopt an asymptotic-preserving framework for the BGK equation (see \cite{filbet2010class}), while leveraging and combining it with other efficient methods in position and velocity spaces. Similar to \cite{xiong2015high}, we discretize in space using a nodal DG (NDG) framework. Whereas, we take advantage of the low-rank structure that kinetic solutions are known to exhibit, opting to evolve a low-rank decomposition of the solution in velocity space. Low-rank methods significantly reduce the storage complexity, and hence the computational complexity. By coupling NDG and low-rank methodologies, we propose a highly efficient framework for solving the BGK equation. Lastly, we integrate in time using a stiffly accurate implicit-explicit (IMEX) Runge-Kutta method that includes a diagonally implicit Runge-Kutta (DIRK) method for evolving the stiff collision operator, especially for small $\varepsilon$. We only consider a second-order IMEX RK method, but extension to higher-order methods is straightforward.

\subsection{Nodal DG discretization in physical space}
Assuming one spatial dimension and two velocity dimensions, the BGK equation simplifies to
\begin{equation}\label{eq: BGK_1d2v}
    \frac{\partial f}{\partial t} + v_x\frac{\partial f}{\partial x} = \frac{1}{\varepsilon}(M_{\mathbf{U}}-f),
\end{equation}
where $f=f(x,v_x,v_y,t)$, with $\Omega_x=[a,b]$ and $\Omega_v=[-V_{\text{max}},V_{\text{max}}]^2$. The Knudsen number $\varepsilon(x)$ is assumed to be spatially dependent. We discretize the spatial domain into $N_x$ intervals,
\[a=x_{\frac{1}{2}}<x_{\frac{3}{2}}<...<x_{N_x-\frac{1}{2}}<x_{N_x+\frac{1}{2}}=b,\]
generating a mesh consisting of elements $I_i\coloneqq[x_{i-\frac{1}{2}},x_{i+\frac{1}{2}}]$, $i=1,2,...,N_x$. For simplicity, we assume a uniform mesh with $h_x=x_{i+\frac{1}{2}}-x_{i-\frac{1}{2}}$ for all $i$. Define the finite dimensional discrete space
\begin{equation}
    V_h^k\coloneqq \{g\in L^2(\Omega_x)\ :\  g|_{I_i}\in \mathbb{P}^k(I_i), 1\leq i\leq N_x\},
\end{equation}
where $k\geq 0$ is a non-negative integer, and $\mathbb{P}^k(I_i)$ denotes the space of polynomials of degree at most $k$ on the element $I_i$. For each element $I_i$ ($1\leq i\leq N_x$), we consider the associated polynomial basis
\begin{equation}
    \{\phi^i_q\in \mathbb{P}^k(I_i)\ :\  1\leq q\leq k+1\},
\end{equation}
where the basis functions $\phi_q^i$ are orthogonal with respect to the $L^2$ inner product over $I_i$, which we denote by $\langle\cdot,\cdot\rangle_{I_i}$. For now, we only focus on discretizing in space by first restricting the solution to an element $I_i$, which we denote $f^i(x\in I_i,v_x,v_y,t) = f(x\in I_i,v_x,v_y,t)$. We desire the weak solution $f_h(\cdot,v_x,v_y,t)\in V_h^k$ such that for all $i=1,2,...,N_x$ and for all $\varphi\in V_h^k$,
\begin{equation}\label{eq: weakform}
    \int_{I_i}{\frac{\partial f_h}{\partial t}\varphi dx} + \widehat{(v_{x}f_h)}_{i+\frac{1}{2}}\varphi^-_{i+\frac{1}{2}} - \widehat{(v_{x}f_h)}_{i-\frac{1}{2}}\varphi^+_{i-\frac{1}{2}} - \int_{I_i}{v_{x}f_h\frac{d\varphi}{dx}dx} = \int_{I_i}\frac{1}{\varepsilon}\left(M_{\mathbf{U}}-f_h\right)\varphi dx,
\end{equation}
where subscript $i\pm\frac{1}{2}$ indicates an approximation as $x\rightarrow x_{i\pm\frac{1}{2}}$, superscript $\pm$ indicates the right or left limit, and the upwind numerical fluxes for $[v_{x}f_h]_{\partial I_i}$ are 
\begin{equation}
    \widehat{v_{x}f_h}=\begin{cases}
        v_{x}f_h^-,&\text{ if }v_{x}\geq 0,\\
        v_{x}f_h^+,&\text{ if }v_{x}<0.
    \end{cases}
\end{equation}

For each element $I_i$ ($1\leq i\leq N_x$), we have that $f_h|_{I_i}\approx f^i$.  Decomposing the weak solution (restricted to an element $I_i$) using the associated polynomial basis,
\begin{equation}\label{eq: fimn}
    f_h(x\in I_i,v_x,v_y,t) = \sum\limits_{q=1}^{k+1}{C^{i}_q(v_x,v_y,t)\phi_q^i(x)}.
\end{equation}

Plugging \eqref{eq: fimn} into equation \eqref{eq: weakform} and integrating against each basis function $\phi^i_p\in V_h^k$, we obtain the differential equation for the coefficients,
\begin{equation}
    m^i_{pp}\frac{dC^{i}_p}{dt} + \sum\limits_{q=1}^{k+1}{\left(\Big[\widehat{v_{x}C^{i}_{q}\phi^i_q}\phi^i_p\Big]_{\partial I_i} - v_{x}C^{i}_{q}b^i_{pq}\right)} = \Big\langle \frac{1}{\varepsilon}(M_{\mathbf{U}}-f_h),\phi^i_p\Big\rangle_{I_i},
\end{equation}
where the entries of the (diagonal) mass matrix $\mathbf{M}^i\in\mathbb{R}^{(k+1)\times(k+1)}$ and stiffness matrix $\mathbf{B}^i\in\mathbb{R}^{(k+1)\times(k+1)}$ are respectively
\begin{equation}
    m^i_{pq} = \Big\langle\phi^i_q,\phi^i_p\Big\rangle_{I_i},\qquad\qquad b^i_{pq} = \Big\langle\phi^i_q,\frac{d\phi^i_p}{dx}\Big\rangle_{I_i}.
\end{equation}

Using an upwind discretization to evaluate the flux term at the boundaries of cell $I_i$,
\begin{align}\label{eq: Cipmn}
\begin{split}
    m^i_{pp}\frac{dC^{i}_{p}}{dt} &+ \max{(v_{x},0)}\sum\limits_{q=1}^{k+1}{\left( C^{i}_{q}\phi^i_q(x^-_{i+\frac{1}{2}})\phi^i_p(x^-_{i+\frac{1}{2}}) - C^{i-1}_{q}\phi^{i-1}_q(x^-_{i-\frac{1}{2}})\phi^i_p(x^+_{i-\frac{1}{2}}) - C^{i}_{q}b^i_{pq}\right)}\\
    &+ \min{(v_{x},0)}\sum\limits_{q=1}^{k+1}{\left( C^{i+1}_{q}\phi^{i+1}_q(x^+_{i+\frac{1}{2}})\phi^i_p(x^-_{i+\frac{1}{2}}) - C^{i}_{q}\phi^{i}_q(x^+_{i-\frac{1}{2}})\phi^i_p(x^+_{i-\frac{1}{2}}) - C^{i}_{q}b^i_{pq}\right)}\\
    &= \Big\langle \frac{1}{\varepsilon}(M_{\mathbf{U}}-f_h),\phi^i_p\Big\rangle_{I_i},
\end{split}
\end{align}
where $\phi^i_q(x^-_{i+\frac{1}{2}})$ and $\phi^{i+1}_q(x^+_{i+\frac{1}{2}})$ respectively denote the left and right limits as $x\rightarrow x_{i+\frac{1}{2}}$. Note that upwinding only applies to the solution, not the test function $\phi^i_p$ which was restricted to $x\in I_i$.

Up to this point, we have followed a more general DG framework that allows any suitable (orthogonal) basis. Now, as with other nodal DG methods, we choose a local nodal basis consisting of Lagrange polynomials. In particular, we let $\phi^i_q(x)=L^i_q(x)\in\mathbb{P}^k(I_i)$ for $q=1,2,...,k+1$, where
\begin{equation}
    L^i_q(x)=\prod\limits_{\substack{1\leq s\leq k+1\\s\neq q}}{\frac{x-x^i_s}{x^i_p-x^i_s}}
\end{equation}
is the Lagrangian defined over $I_i$ and associated with the $k+1$ Gauss-Legendre quadrature nodes $\{x^i_s\in I_i\ :\ s=1,...,k+1\}$. Mapping from element $I_i$ onto the reference element $I\coloneqq[-\frac{1}{2},\frac{1}{2}]$ using a linear transformation, the quadrature nodes on $I$ are $\xi_s=(x^i_s-x_i)/h_x$, $s=1,...,k+1$. Further let $\{w_s\ :\ s=1,...,k+1\}$ denote the corresponding quadrature weights on $I$, and let $L_q(x)$ for $q=1,...,k+1$ denote the associated Lagrange polynomials on $I$. The nodal/Lagrangian basis simplifies many of the operations in equation \eqref{eq: Cipmn}. Mapping to from $I_i$ to the reference element $I$, one can easily verify the following:
\begin{equation}
    m^i_{pp} = h_xw_p,
\end{equation}
\begin{equation}
    b^i_{pq} = w_q\frac{dL_p}{d\xi}(\xi_q),
\end{equation}
\begin{equation}
    \Big\langle \frac{1}{\varepsilon}(M_{\mathbf{U}}-f_h),L^i_p\Big\rangle_{I_i} = \frac{h_xw_p}{\varepsilon^i_p}\left(M_{\mathbf{U}}(x^i_p,v_x,v_y,t)-C^i_p\right),
\end{equation}
where $\varepsilon(x^i_p)=\varepsilon^i_p$. Another advantage to using the Lagrangian nodal basis is that the coefficients are identical to nodal values of the solution, that is, $f^i(x^i_p,v_x,v_y,t)=C^{i}_{p}(v_x,v_y,t)$ for all $i=1,...,N_x$ and all $p=1,...,k+1$, by \eqref{eq: fimn}. That is, computing the coefficients in turn produces the nodal values of the solution.

In velocity space, we discretize $\Omega_v$ using a tensor product of uniform computational grids, $\mathbf{v}_x\otimes\mathbf{v}_y$, with
\[\mathbf{v}_x:v_{x,1}<...<v_{x,m}<...<v_{x,N_{v_x}},\qquad\mathbf{v}_y:v_{y,1}<...<v_{y,n}<...<v_{y,N_{v_y}},\]
where we shall assume $N_v=N_{v_x}=N_{v_y}$ and $h_v=v_{x,m+1}-v_{x,m}=v_{y,n+1}-v_{y,n}$ for all $m$ and $n$. Over this velocity grid, we store the solution's coefficients $C^i_p(v_{x,m},v_{y,n},t)$ in a matrix $\mathbf{C}^i_p(t)\in\mathbb{R}^{N_v\times N_v}$ for each element $I_i$ ($1\leq i\leq N_x$) and each quadrature node $x^i_p$ ($1\leq p\leq k+1$). Under the associated Lagrangian basis, and discretized in velocity space, equation \eqref{eq: Cipmn} reduces to
\begin{equation}\label{eq: Cipmn_v2}
    \frac{d\mathbf{C}^{i}_{p}}{dt} = \mathcal{F}^p_+\left(\mathbf{C}^{i-1}_{\cdot},\mathbf{C}^{i}_{\cdot}\right) + \mathcal{F}^p_-\left(\mathbf{C}^{i}_{\cdot},\mathbf{C}^{i+1}_{\cdot}\right) + \frac{1}{\varepsilon^i_p}\left(\widetilde{\mathbf{M}}_{\mathbf{U}}(x^i_p,t) - \mathbf{C}^{i}_{p}\right),
\end{equation}
where $\widetilde{\mathbf{M}}_{\mathbf{U}}(x^i_p,t)\in\mathbb{R}^{N_v\times N_v}$ discretizes the local Maxwellian over the velocity grid and is constructed to ensure that the discrete mass, momentum, and energy moments of the collision operator vanishes (to be discussed shortly) and
\begin{align}\label{eq: Cip_v2_fluxes}
\begin{split}
    \mathcal{F}^p_+&\left(\mathbf{C}^{i-1}_{\cdot},\mathbf{C}^{i}_{\cdot}\right)\\
    &= -\frac{1}{h_xw_p}\max{(\mathbf{v}_{x},0)}*\sum\limits_{q=1}^{k+1}{\left( \mathbf{C}^{i}_{q}L_q\Big(\frac{1}{2}\Big)L_p\Big(\frac{1}{2}\Big) - \mathbf{C}^{i-1}_{q}L_q\Big(\frac{1}{2}\Big)L_p\Big(-\frac{1}{2}\Big) - \mathbf{C}^{i}_{q}w_q\frac{dL_p}{d\xi}(\xi_q)\right)},\\
    \mathcal{F}^p_-&\left(\mathbf{C}^{i}_{\cdot},\mathbf{C}^{i+1}_{\cdot}\right)\\
    &= -\frac{1}{h_xw_p} \min{(\mathbf{v}_{x},0)}*\sum\limits_{q=1}^{k+1}{\left( \mathbf{C}^{i+1}_{q}L_q\Big(-\frac{1}{2}\Big)L_p\Big(\frac{1}{2}\Big) - \mathbf{C}^{i}_{q}L_q\Big(-\frac{1}{2}\Big)L_p\Big(-\frac{1}{2}\Big) - \mathbf{C}^{i}_{q}w_q\frac{dL_p}{d\xi}(\xi_q)\right)},
\end{split}
\end{align}
with $*$ denoting the Hadamard (elementwise) product which we only use in the $\mathbf{v}_x$ dimension.
\subsection{Rank-adaptive discretization in velocity space}
Storing the $\mathbf{C}^i_p\in\mathbb{R}^{N_v\times N_v}$ for each element $I_i$ ($1\leq i\leq N_x$) and each quadrature node $x^i_s$ ($1\leq s\leq k+1$) increases both the storage and computational complexity. When solving kinetic equations in high-dimensional phase space, often to large times, this process can quickly become expensive. Low-rank matrix and tensor decompositions offer a remedy to this curse of dimensionality, which when coupled with arbitrarily high-order nodal DG methods, where the order scales with the number of quadrature nodes, leads to significant savings. A scalar function $f(v_x,v_y,t)$ is said to be rank-$r$ if it can be written as a sum of $r$ separable functions,
\begin{equation}\label{eq: schmidt}
    f(v_x,v_y,t) = \sum\limits_{l=1}^{r}{\alpha_{l}(t)\theta_{l}(v_x,t)\psi_{l}(v_y,t)}.
\end{equation}

If $\{\theta_1,...,\theta_r\}$ and $\{\psi_1,...,\psi_r\}$ are orthonormal sets and the scalars $\alpha_{\ell}$ are all real, non-negative, and unique up to re-ordering, then \eqref{eq: schmidt} is known as the \emph{Schmidt decomposition}. The singular value decomposition (SVD) is the discrete analogue to the Schmidt decomposition in two dimensions. Many kinetic equations, including the BGK equation, are known to exhibit low-rank solutions in velocity space, especially near equilibrium. This can be inferred from Property \ref{prop: equilibrium} since a Maxwellian distribution is a rank-1 function. Since $C^{i}_{p}(v_x,v_y,t)$ are the nodal values of the distribution function $f^i(x^i_p,v_x,v_y,t)$, we can assume that $\mathbf{C}^i_p$ can be approximated by a low-rank matrix; by a \textit{low-rank approximation}, we mean truncating the SVD according to a given tolerance on the singular values. Decomposing $\mathbf{C}^i_p$, we assume that
\begin{equation}\label{eq: Cip}
    \mathbf{C}^i_p(t) = \mathbf{\Theta}^i_p(t)\mathbf{S}^i_p(t)\mathbf{\Psi}^i_p(t)^T,
\end{equation}
where $\mathbf{\Theta}^i_p(t),\mathbf{\Psi}^i_p(t)\in\mathbb{R}^{N_v\times r(t)}$ have orthonormal columns, and $\mathbf{S}^i_p(t)\in\mathbb{R}^{r(t)\times r(t)}$ is diagonal with entries organized in decreasing magnitude. We note that solution \eqref{eq: Cip} is not necessarily the SVD, rather, the SVD is a special case that we usually consider. Solving the matrix differential equation associated with equation \eqref{eq: Cipmn_v2} reduces to summing several low-rank matrices, again recalling that the Maxwellian $M_{\mathbf{U}}$ is a rank-1 function in velocity space. Since naively adding together several low-rank matrices (or tensors) can increase the rank undesirably, we must do so in an efficient manner while maintaining as low a rank as possible. We use the QR-SVD approach used in \cite{guo2022low,nakao2025reduced}, described in Algorithm \ref{algo: qrsvd}. A low-rank approximation is obtained within tolerance $\vartheta>0$. It's important that the tolerance is smaller than the local truncation error of the time-stepping method so that the convergence order is unaffected; in practice, we choose $\vartheta\in[10^{-8},10^{-4}]$.

\begin{algorithm}[h!]
	\caption{Summing two low-rank matrices (using MATLAB syntax)}
	\label{algo: qrsvd}
		{\bf Input:} Low-rank matrices $\mathbf{A}_1=\mathbf{U}_1\mathbf{S}_1\mathbf{V}_1^T$ and $\mathbf{A}_2=\mathbf{U}_2\mathbf{S}_2\mathbf{V}_2^T$.\\
	    {\bf Output:} Low-rank approximation of $\mathbf{A}=\mathbf{A}_1+\mathbf{A}_2 \approx \mathbf{U}\mathbf{S}\mathbf{V}^T$.  
	\begin{algorithmic}[1]
	\State Augment the bases together.
        \Statex \texttt{[Qx,Rx] = qr([U1,U2],0);}
        \Statex \texttt{[Qy,Ry] = qr([V1,V2],0);}
        \State Compress by truncating the singular values to tolerance $\vartheta>0$ to remove redundant basis vectors.
        \Statex \texttt{S = blkdiag(S1,S2); \%direct sum of diagonal matrices S1 and S2}
        \Statex \texttt{[U,S,V] = svd(Rx*S*Ry',0);}
        \Statex \texttt{r = find(diag(S)>tol,1,`last');}
        \Statex \texttt{U = Qx*U(:,1:r);}
        \Statex \texttt{S = S(1:r,1:r);}
        \Statex \texttt{V = Qy*V(:,1:r);}
	\end{algorithmic}
\end{algorithm}

When a low-rank representation of the sum of three or more low-rank matrices is desired, Algorithm \ref{algo: qrsvd} can be applied in a hierarchical fashion. For example, if four low-rank matrices are to be summed: the first and second can be summed, the third and fourth can be summed, and then the two resulting low-rank matrices can be summed. This strategy is used to sum the low-rank matrices in equations \eqref{eq: Cipmn_v2} and \eqref{eq: Cip_v2_fluxes}.

Another advantage to a low-rank representation in velocity space is the reduced computational cost of integration. Consider a double integral over the velocity grid $\mathbf{v}_x\otimes\mathbf{v}_y$, whose Riemann sum would cost $\sim N_v^2$ flops. Alternatively, if the function has a low-rank decomposition, one simply has to sum (integrate) over each low-rank basis (dimension) separately before multiplying with the core matrix (coefficients), as seen in Figure \ref{fig: integratelowrank}. The computational complexity is then reduced to $\sim N_vr+r^2$ flops. Throughout the remainder of this paper, we refer to this process as the Low-Rank Double Integral (LRDI) function, \texttt{LRDI(U,S,V)}, which inputs a low-rank decomposition $\mathbf{USV}^T$, and outputs the resulting double integral.
\begin{figure}[h!]
    \centering
    \includegraphics[width=0.85\textwidth]{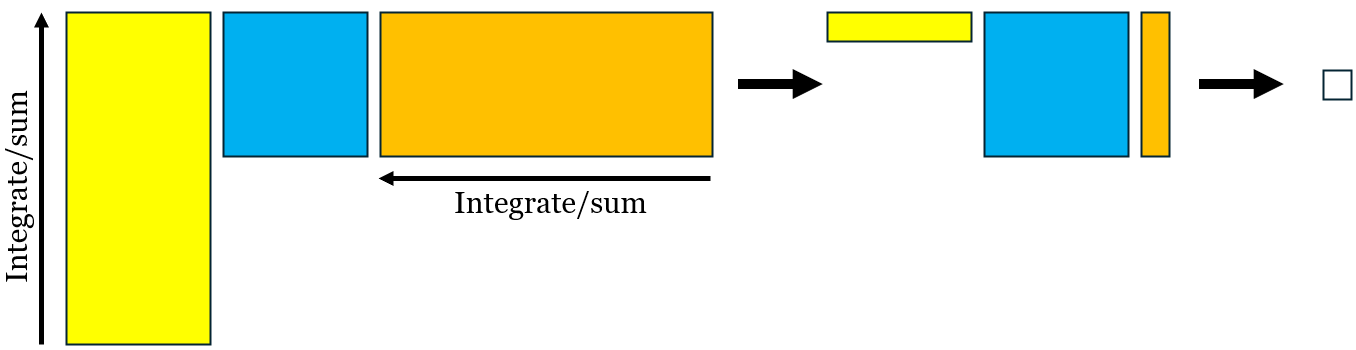}
    \caption{Integrating a low-rank matrix using the \texttt{LRDI} function.}
    \label{fig: integratelowrank}
\end{figure}

\begin{rem}
Although the solution can be decomposed in physical space, there is no guarantee that low-rank structures will exist for this model in $x$. Whereas, we can attempt to leverage the low-rank Maxwellian structures observed in velocity space, particularly near equilibrium.
\end{rem}

\subsection{Discrete moment equations}\label{sec: discretemomenteqns}
When solving equation \eqref{eq: Cipmn_v2}, the term from the BGK collision operator should be evolved implicitly due to its inherent stiffness. As such, we require the values of $\widetilde{\mathbf{M}}_{\mathbf{U}}(x^i_p,t)$ at the stages of the Runge-Kutta method. In other words, we need the moments of the distribution function at each quadrature node at each stage. Recalling that the coefficients are identical to nodal values of the solution, and referring to equation \eqref{eq: U}, we define the discrete moments at the nodal values,
\begin{equation}\label{eq: U_iq}
    \mathbf{U}^i_p(t)\coloneqq
    \begin{bmatrix*}
        n^i_p\\
        (nu_x)^i_p\\
        (nu_y)^i_p\\
        E^i_p
    \end{bmatrix*}
    =
    \begin{bmatrix*}
        \texttt{LRDI}(\mathbf{\Theta}^i_p,\mathbf{S}^i_p,\mathbf{\Psi}^i_p)\\
        \texttt{LRDI}(\mathbf{v}_x*\mathbf{\Theta}^i_p,\mathbf{S}^i_p,\mathbf{\Psi}^i_p)\\
        \texttt{LRDI}(\mathbf{\Theta}^i_p,\mathbf{S}^i_p,\mathbf{v}_y*\mathbf{\Psi}^i_p)\\
        \big(\texttt{LRDI}(\mathbf{v}_x^2*\mathbf{\Theta}^i_p,\mathbf{S}^i_p,\mathbf{\Psi}^i_p)+\texttt{LRDI}(\mathbf{\Theta}^i_p,\mathbf{S}^i_p,\mathbf{v}_y^2*\mathbf{\Psi}^i_p)\big)/2
    \end{bmatrix*},
\end{equation}
for $i=1,...,N_x$ and $p=1,...,k+1$, where $\mathbf{v}_x^2=\mathbf{v}_x*\mathbf{v}_x$ and $\mathbf{v}_y^2=\mathbf{v}_y*\mathbf{v}_y$. Letting $\mathbf{U}_h(\cdot,t)\in V_h^k$ approximate the discrete moments, $\mathbf{U}_h(x\in I_i,t) = \sum\limits_{q=1}^{k+1}{U_{p}^{i}(t)\phi_q^i(x)}$. Computing the moments of equation \eqref{eq: Cipmn_v2} yields the discrete moment equations of the 1d2v BGK model,
\begin{equation}\label{eq: momenteqns}
\frac{d\mathbf{U}^i_p}{dt} = \mathbf{P}^+_p(\mathbf{F}^{i-1,+}_{\cdot},\mathbf{F}^{i,+}_{\cdot}) + \mathbf{P}^-_p(\mathbf{F}^{i,-}_{\cdot},\mathbf{F}^{i+1,-}_{\cdot})
\end{equation}
where the discrete moments of the collision operator are ensured to vanish exactly (using quadrature corrected moments; see subsection \ref{sec: QCM}), and the upwind flux terms are

\begin{align}
\begin{split}
    \mathbf{P}^+_p(\mathbf{F}^{i-1,+}_{\cdot},\mathbf{F}^{i,+}_{\cdot}) =&-\frac{1}{h_xw_p}\sum\limits_{q=1}^{k+1}{\left(\mathbf{F}^{i,+}_qL_q\Big(\frac{1}{2}\Big)L_p\Big(\frac{1}{2}\Big) - \mathbf{F}^{i-1,+}_qL_q\Big(\frac{1}{2}\Big)L_p\Big(-\frac{1}{2}\Big) - \mathbf{F}^{i,+}_qw_q\frac{dL_p}{d\xi}(\xi_q)\right)},\\
    \mathbf{P}^-_p(\mathbf{F}^{i,-}_{\cdot},\mathbf{F}^{i+1,-}_{\cdot})=&- \frac{1}{h_xw_p}\sum\limits_{q=1}^{k+1}{\left(\mathbf{F}^{i+1,-}_qL_q\Big(-\frac{1}{2}\Big)L_p\Big(\frac{1}{2}\Big) - \mathbf{F}^{i,-}_qL_q\Big(-\frac{1}{2}\Big)L_p\Big(-\frac{1}{2}\Big) - \mathbf{F}^{i,-}_qw_q\frac{dL_p}{d\xi}(\xi_q)\right)},
\end{split}
\end{align}
with
\begin{align}\label{eq: moment_fluxes}
\begin{split}
    \mathbf{F}_q^{i,+} &=
    \begin{bmatrix*}
        \texttt{LRDI}(\mathbf{v}_x^+*\mathbf{\Theta}^i_p,\mathbf{S}^i_p,\mathbf{\Psi}^i_p)\\
        \texttt{LRDI}((\mathbf{v}_x^+)^2*\mathbf{\Theta}^i_p,\mathbf{S}^i_p,\mathbf{\Psi}^i_p)\\
        \texttt{LRDI}(\mathbf{v}_x^+*\mathbf{\Theta}^i_p,\mathbf{S}^i_p,\mathbf{v}_y*\mathbf{\Psi}^i_p)\\
        \big(\texttt{LRDI}((\mathbf{v}_x^+)^3*\mathbf{\Theta}^i_p,\mathbf{S}^i_p,\mathbf{\Psi}^i_p)+\texttt{LRDI}(\mathbf{v}_x^+*\mathbf{\Theta}^i_p,\mathbf{S}^i_p,\mathbf{v}_y^2*\mathbf{\Psi}^i_p)\big)/2
    \end{bmatrix*}
\end{split},
\\
\begin{split}
    \mathbf{F}_q^{i,-} &=
    \begin{bmatrix*}
        \texttt{LRDI}(\mathbf{v}_x^-*\mathbf{\Theta}^i_p,\mathbf{S}^i_p,\mathbf{\Psi}^i_p)\\
        \texttt{LRDI}((\mathbf{v}_x^-)^2*\mathbf{\Theta}^i_p,\mathbf{S}^i_p,\mathbf{\Psi}^i_p)\\
        \texttt{LRDI}(\mathbf{v}_x^-*\mathbf{\Theta}^i_p,\mathbf{S}^i_p,\mathbf{v}_y*\mathbf{\Psi}^i_p)\\
        \big(\texttt{LRDI}((\mathbf{v}_x^-)^3*\mathbf{\Theta}^i_p,\mathbf{S}^i_p,\mathbf{\Psi}^i_p)+\texttt{LRDI}(\mathbf{v}_x^-*\mathbf{\Theta}^i_p,\mathbf{S}^i_p,\mathbf{v}_y^2*\mathbf{\Psi}^i_p)\big)/2
    \end{bmatrix*}
\end{split},
\end{align}
where $\mathbf{v}_x^+=\text{max}(\mathbf{v}_x,\mathbf{0})$ and $\mathbf{v}_x^-=\text{min}(\mathbf{v}_x,\mathbf{0})$. As a result of the collisional invariants for the BGK operator (see Property \ref{prop: collisional_invariants}), the moment equations \eqref{eq: momenteqns} only have the nonstiff transport term and can be evolved explicitly. Thus, we can solve for the discrete moments at the nodal values with the same explicit Runge-Kutta method from the IMEX method that will update the coefficients of the distribution function.

\subsection{IMEX Runge-Kutta discretization in time}\label{sec: IMEXRK}
We solve the BGK equation using a second-order IMEX RK method, with higher-order methods a straightforward extension; see \cite{ascher1997implicit} for a comprehensive collection of IMEX RK methods. DIRK methods have been shown to be a good choice when evolving the BGK operator, which can be quite stiff for small $\varepsilon$; and naturally, the transport term is evolved explicitly.

While the coefficients in equation \eqref{eq: Cipmn_v2} are evolved using the IMEX RK method, the discrete moments in equation \eqref{eq: momenteqns} will be updated using the same explicit RK method that's utilized in the IMEX method. Broadly speaking, equation \eqref{eq: momenteqns} is solved to the first stage $t^{(1)}$ of the explicit RK method; those moments are fed into equation \eqref{eq: Cipmn_v2} in order to solve for the coefficients at the first stage $t^{(1)}$ of the IMEX RK method; the coefficients are used to compute the flux terms \eqref{eq: moment_fluxes} in order to solve for the discrete moments in equation \eqref{eq: momenteqns} at the second stage $t^{(2)}$; and so forth until the coefficients at the next time-step are computed. It is imperative that the times of the stages in both the explicit RK and IMEX RK methods coincide.

We want to update the solution from time $t^{\ell}$ to time $t^{\ell+1}$, with $\Delta t=t^{\ell+1}-t^{\ell}$ for $\ell=0,1,...,N_t$. We consider the following stiffly accurate second-order IMEX RK (and associated explicit RK) scheme:
\begin{subequations}
\begin{align}
    \begin{split}\label{eq: U_stage1}
    (\mathbf{U}^i_p)^{(1)} &= (\mathbf{U}^i_p)^{(0)} + \gamma\Delta t(\mathbf{P}^+_p+\mathbf{P}^-_p)^{(0)}
    \end{split}
    \\
    \begin{split}\label{eq: C_stage1}
    (\mathbf{C}^{i}_{p})^{(1)} &= (\mathbf{C}^{i}_{p})^{(0)} + \gamma\Delta t(\mathcal{F}^p_+ + \mathcal{F}^p_-)^{(0)} + \frac{\gamma\Delta t}{\varepsilon^i_p}\left(\widetilde{\mathbf{M}}_{\mathbf{U}}(x^i_p,t^{(1)}) - (\mathbf{C}^{i}_{p})^{(1)}\right)
    \end{split}
    \\
    \begin{split}\label{eq: U_stage2}
    (\mathbf{U}^i_p)^{(2)} &= (\mathbf{U}^i_p)^{(0)} + \delta\Delta t(\mathbf{P}^+_p+\mathbf{P}^-_p)^{(0)} + (1-\delta)\Delta t(\mathbf{P}^+_p+\mathbf{P}^-_p)^{(1)}
    \end{split}
    \\
    \begin{split}\label{eq: C_stage2}
    (\mathbf{C}^{i}_{p})^{(2)} &= (\mathbf{C}^{i}_{p})^{(0)} + \delta\Delta t(\mathcal{F}^p_+ + \mathcal{F}^p_-)^{(0)} + (1-\delta)\Delta t(\mathcal{F}^p_+ + \mathcal{F}^p_-)^{(1)}\\
    &\quad + \frac{(1-\gamma)\Delta t}{\varepsilon^i_p}\left(\widetilde{\mathbf{M}}_{\mathbf{U}}(x^i_p,t^{(1)}) - (\mathbf{C}^{i}_{p})^{(1)}\right) + \frac{\gamma\Delta t}{\varepsilon^i_p}\left(\widetilde{\mathbf{M}}_{\mathbf{U}}(x^i_p,t^{(2)}) - (\mathbf{C}^{i}_{p})^{(2)}\right)
    \end{split}
\end{align}
\label{eqn:numerical_solution}
\end{subequations}
where $\gamma=1-\sqrt{2}/2$, $\delta=1-1/(2\gamma)$, and superscripts are used to denote numerical solutions at the stages $t^{(0)}=t^{\ell}$, $t^{(1)}=t^{\ell}+\gamma\Delta t$ and $t^{(2)}=t^{\ell+1}$. When solving equation \eqref{eq: C_stage1}, move all $(\mathbf{C}^{i}_{p})^{(1)}$ terms to the left side of the equation and divide by $1+\gamma\Delta t/\varepsilon^i_p$; do the same to equation \eqref{eq: C_stage2}.

There are many advantages that come from combining nodal DG and low-rank methods into a single framework. The proposed scheme is highly parallelizable and can be arbitrarily high-order in physical space due to the nodal DG method. The computational cost of several matrix computations that are required for the kinetic and moment equations is reduced when working with low-rank decompositions in velocity space. And, the distribution function can be evolved using high-order IMEX discretizations. It's also worth noting that by feeding the distribution function into the integral form flux terms of the moment equations, we avoid the moment closure problem that fluid solvers often need to address.


\subsection{Post-processing slope limiters}
Several kinetic simulations evolve solutions that exhibit discontinuities or sharp gradients in physical space, hence necessitating numerical schemes that are robust to spurious oscillations. In the presence of discontinuities, oscillations can occur when evolving both the kinetic solution \textit{and} the moments in \eqref{eq: U_stage1}-\eqref{eq: C_stage2}; the oscillations will remain after computing the corrected moments \eqref{eq: nonlinear_qcm_system}.

Although there is a rich literature of high resolution methods \cite{shu2009high,toro2013riemann} (e.g., flux and slope limiters) in the full-rank setting, rigorously extending these methods to low-rank frameworks remains a relatively young and underdeveloped area of research. In particular, efficiently applying a limiter to either the kinetic fluxes or kinetic solution is non-trivial mainly due to the enforcement of a low-rank decomposition. To work around this, we follow the strategy from \cite{xiong2015high}, in which a slope limiter is applied to the system states/moments as a post-processing step. Referring to equations \eqref{eq: U_stage1}-\eqref{eq: C_stage2}, we performed the following steps at each stage: (1) update the moments, (2) apply a slope limiter to the updated moments, and (3) use the post-processed moments in the kinetic equation. We apply a slope limiter to the moments instead of the kinetic solution for two reasons. First, discontinuities in the kinetic solution ultimately come from discontinuities in the system states. Since the BGK operator is defined in terms of the macroscopic quantities (in $M_{\mathbf{U}}$), reducing oscillations in the moments is critical. Second, the moment system is smaller than the kinetic solution, and the moments are not held in a low-rank decomposition.

To apply a slope limiter, we must first identify \textit{troubled elements}, that is, elements in which there are large oscillations. Following which, a limiter of choice is applied to the system states/moments. Similar to the strategy in \cite{xiong2015high,zhong2013simple}, we identify the troubled elements using the standard minmod limiter \cite{harten1983high}. We provide a brief recap of the minmod limiter, also provided in \cite{zhong2013simple}, for completeness. Let $\bar{U}_j$ denote the cell average of a scalar function $U$ over element $I_j$ ($1\leq j\leq N_x$); $U$ could be any of the macroscopic quantities, and the cell averages are computed using the Gauss-Legendre quadrature since the nodal values are known. Further denote $\tilde{U}_j=U_{j+\frac{1}{2}}^--\bar{U}_j$ and $\tilde{\bar{U}}_j=\bar{U}_j-U_{j-\frac{1}{2}}^+$; the limits at the cell boundary are computed using the nodal basis (see the footnote in subsection \ref{sec: discretemomenteqns}). The \textit{minmod function} is defined by
\begin{equation}\label{eq: minmod}
    m(a_1,...,a_{\nu})=
    \begin{cases}
        s\min\limits_{1\leq\ell\leq\nu}{|a_{\ell}|},&\text{if}\ s=\text{sign}(a_1)=...=\text{sign}(a_{\nu}),\\
        0,&\text{otherwise}.
    \end{cases}
\end{equation}

Using the minmod function to modify the differences $\tilde{U}_j$ and $\tilde{\bar{U}}_j$,
\begin{equation}\label{eq: minmod_activate}
    \tilde{U}^{(\text{mod})}_j=m(\tilde{U}_j,\bar{U}_{j+1}-\bar{U}_j,\bar{U}_j-\bar{U}_{j-1}),\qquad \tilde{\bar{U}}_j^{(\text{mod})}=m(\tilde{\bar{U}}_j,\bar{U}_{j+1}-\bar{U}_j,\bar{U}_j-\bar{U}_{j-1}).
\end{equation}

If the minmod function \eqref{eq: minmod} is activated in either expression in equation \eqref{eq: minmod_activate}, then the element is considered troubled, and we apply a slope limiter of choice to post-process the nodal values of the moments in each troubled element. In our numerical experiments, we compare the performance of two slope limiters: the \textit{minmod limiter} used in \cite{xiong2015high} (also see \cite{cockburn1999discontinuous}, Section 2.4.3, limiter C), and the \textit{WENO limiter} proposed in \cite{zhong2013simple}. Naturally, other limiters can also be used; regardless of the slope limiter that's applied to nodal values, we use the minmod limiter just described to identify the troubled cells. Applying either limiter will approximate the macroscopic quantity at the cell boundaries, $V_{j+\frac{1}{2}}^-\approx U_{j+\frac{1}{2}}^-$ and $V_{j-\frac{1}{2}}^+\approx U_{j-\frac{1}{2}}^+$.

\begin{rem}\label{ref: limiters}
The minmod limiter presented in \cite{xiong2015high} only approximates the moments at the element boundaries, using the linear interpolant between these two values to approximate the moments at the nodal values. While this introduces a second-order error, it is only applied over the troubled cells. Whereas, the WENO limiter proposed in \cite{zhong2013simple} defines the reconstruction polynomial over the entire element and can thus be used to get high-order approximations at the nodal values.
\end{rem}


\subsection{Quadrature corrected moments}\label{sec: QCM}
We enforce the vanishing of the discrete moments of the collision operator in equation \eqref{eq: momenteqns} by adopting a specific discrete definition of the Maxwellian:
\begin{align}
    \label{eqn:special_mu}
    \widetilde{{\bf M}}_{\bf U} \left( x^i_p; {\cal M}^{i}_{p} \right) = 
    \left({\cal N}_{v_x}\right)^{i}_p {\cal I}^{i}_{p} \left( {\cal N}^{\text{T}}_{v_y} \right)^{i}_{p}.
\end{align}

Here, ${\cal M}^{i}_{p} = \left\{n^i_{M,p}, {\bf u}^i_{M,p}, T^i_{M,p} \right\}$ are the \textit{quadrature corrected moments} (QCM) \cite{taitano_jcp_2017_ep_rfp}. For the rest of this subsection, we drop the $x$ cell and Gauss quadrature indices for brevity. The functions ${\cal N}_{v_x}({\bf v_x}; {\cal M}) = e^{-\frac{\left(\mathbf{v}_x - u_{x,M} \right)^2}{2T}} \in \mathbb{R}^{N_v}_+$ and ${\cal N}_{v_y}({\bf v_y}; {\cal M}) = e^{-\frac{\left(\mathbf{v}_y - u_{y,M} \right)^2}{2T}} \in \mathbb{R}^{N_v}_+$ represent rank-1 basis functions of the Maxwellian in $v_x$ and $v_y$, respectively, while ${\cal I} = \frac{n_M}{2 \pi  T_M} \in \mathbb{R}_+$ is a normalization factor. The role of ${\cal M}$ is to guarantee that the discrete moments of the collision operator, evaluated with the Maxwellian parameterized by ${\cal M}$, vanish exactly. 

The corrected quantities are obtained by solving a nonlinear system for ${\cal M}$ at each Gauss–Legendre quadrature node, using a Newton method with the code found in \cite{Mjaavatten2025}. The $k^{\text{th}}$ Newton iteration is given by ${\cal M}^{k+1} = {\cal M}^{k} + \delta{\cal M}^{k}$, with the update $\delta {\cal M}^{k} = -\left(\mathbb{J}^{k}\right)^{-1} {\bf R}^{k}$. Here, $\mathbb{J}^{k} \equiv \frac{\partial {\bf R}^{k}}{\partial {\cal M}^k} \in \mathbb{R}^{4\times 4}$ is the Jacobian matrix, and ${\bf R}^{k} \equiv {\bf R}\left( {\cal M}^{k} \right) = \{R^k_{n}, R^k_{nu_x}, R^k_{nu_y}, R^k_{E}\}^T \in \mathbb{R}^4$ is the residual vector with components defined as
\begin{subequations}
\label{eq: nonlinear_qcm_system}
    \begin{align}
        R_{n_M} = &n^i_p - \texttt{LRDI}\!\left( \left({\cal N}_{v_x}\right)^{i}_p , {\cal I}^{i}_{p} , \left( {\cal N}_{v_y} \right)^{i}_{p} \right), \\
        R_{nu_x} = &(nu_x)^i_p - \texttt{LRDI}\!\left( {\bf v_x} *  \left( {\cal N}_{v_x}\right)^{i}_p , {\cal I}^{i}_{p} , \left( {\cal N}_{v_y} \right)^{i}_{p} \right), \\
        R_{nu_y} = &(nu_y)^i_p - \texttt{LRDI}\!\left( \left({\cal N}_{v_x}\right)^{i}_p , {\cal I}^{i}_{p} , {\bf v_y} * \left(  {\cal N}_{v_y} \right)^{i}_{p} \right), \\
        R_{E} = &E^i_p - \tfrac{1}{2}\Big[
            \texttt{LRDI}\!\left(\mathbf{v}_x^2 * \left({\cal N}_{v_x}\right)^{i}_p , {\cal I}^{i}_{p} , \left( {\cal N}_{v_y} \right)^{i}_{p} \right)
            + \texttt{LRDI}\!\left(\left({\cal N}_{v_x}\right)^{i}_p , {\cal I}^{i}_{p} , {\bf v^2_y} * \left( {\cal N}_{v_y} \right)^{i}_{p}\right)\Big].
    \end{align}
\end{subequations}
The Newton solver iterates until the stopping criterion, $R^k \le 10^{-13} R^0$, is satisfied, where $R^k \equiv \left| {\bf R}({\cal M}^k) \right|$. We note that the Newton solver typically converges to the specified tolerance in a few iterations and constitutes negligible cost of the overall algorithm. Due to the QCM procedure, the moments computed from equation \eqref{eq: U_stage2} are conserved up to machine precision. However, the discrete moments of the solution computed from equation \eqref{eq: C_stage2} preserves this conservation only up to the truncation tolerance. This is demonstrated in our numerical experiments.

\begin{proposition}
    For the BGK equation \eqref{eq: BGK}, we consider the numerical scheme \eqref{eqn:numerical_solution} equipped with the quadrature corrected moments. In the asymptotic limit $\varepsilon\to 0$, the numerical solution for the distribution function converges to the corrected local Maxwellian for all $1\leq i\leq N_x$ and $1\leq p\leq k+1$.
\end{proposition}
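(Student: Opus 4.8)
The plan is to exploit the asymptotic-preserving structure encoded in the stiffly accurate IMEX pair \eqref{eqn:numerical_solution} together with the moment consistency guaranteed by the QCM construction \eqref{eqn:special_mu}. The organizing observation is that neither moment update \eqref{eq: U_stage1} nor \eqref{eq: U_stage2} carries a stiff $1/\varepsilon$ contribution: by Property \ref{prop: collisional_invariants} and the very definition of the corrected moments $\mathcal{M}^i_p$ through the residuals \eqref{eq: nonlinear_qcm_system}, the discrete moments of the collision operator vanish identically. Consequently the macroscopic states $(\mathbf{U}^i_p)^{(1)},(\mathbf{U}^i_p)^{(2)}$---and hence the parameters defining $\widetilde{\mathbf{M}}_{\mathbf{U}}$ at each stage---are determined by explicit transport alone and remain bounded uniformly in $\varepsilon$. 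First I would record the companion discrete consistency relation: taking the standard moments of \eqref{eq: C_stage1} and using that $\widetilde{\mathbf{M}}_{\mathbf{U}}(x^i_p,t^{(1)})$ carries exactly the moments $(\mathbf{U}^i_p)^{(1)}$ (the defining property of \eqref{eq: nonlinear_qcm_system}) collapses the stiff term and reproduces \eqref{eq: U_stage1}, so the moments of $(\mathbf{C}^i_p)^{(1)}$ equal $(\mathbf{U}^i_p)^{(1)}$ for every $\varepsilon>0$; the analogous identity holds at the second stage. This decoupling is what lets the Maxwellian be treated as a fixed, $\varepsilon$-independent target while passing to the limit.

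Next I would solve each implicit stage explicitly. Collecting the $(\mathbf{C}^i_p)^{(1)}$ terms in \eqref{eq: C_stage1} and clearing the factor $1+\gamma\Delta t/\varepsilon$ gives
\begin{equation*}
    (\mathbf{C}^{i}_{p})^{(1)} = \frac{\varepsilon\big[(\mathbf{C}^{i}_{p})^{(0)} + \gamma\Delta t(\mathcal{F}^p_+ + \mathcal{F}^p_-)^{(0)}\big] + \gamma\Delta t\,\widetilde{\mathbf{M}}_{\mathbf{U}}(x^i_p,t^{(1)})}{\varepsilon + \gamma\Delta t},
\end{equation*}
from which $(\mathbf{C}^{i}_{p})^{(1)}\to\widetilde{\mathbf{M}}_{\mathbf{U}}(x^i_p,t^{(1)})$ as $\varepsilon\to 0$. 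Rearranging the same identity isolates the first-stage relaxation residual,
\begin{equation*}
    \widetilde{\mathbf{M}}_{\mathbf{U}}(x^i_p,t^{(1)}) - (\mathbf{C}^{i}_{p})^{(1)} = \frac{\varepsilon\big[\widetilde{\mathbf{M}}_{\mathbf{U}}(x^i_p,t^{(1)}) - (\mathbf{C}^{i}_{p})^{(0)} - \gamma\Delta t(\mathcal{F}^p_+ + \mathcal{F}^p_-)^{(0)}\big]}{\varepsilon + \gamma\Delta t},
\end{equation*}
which is manifestly $\mathcal{O}(\varepsilon)$; this smallness is exactly what the second stage requires.

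Applying the same manipulation to \eqref{eq: C_stage2}, collecting the $(\mathbf{C}^i_p)^{(2)}$ terms and multiplying through by $\varepsilon$, I would arrive at
\begin{equation*}
    (\varepsilon + \gamma\Delta t)(\mathbf{C}^{i}_{p})^{(2)} = \varepsilon\,\mathcal{T} + (1-\gamma)\Delta t\big(\widetilde{\mathbf{M}}_{\mathbf{U}}(x^i_p,t^{(1)}) - (\mathbf{C}^{i}_{p})^{(1)}\big) + \gamma\Delta t\,\widetilde{\mathbf{M}}_{\mathbf{U}}(x^i_p,t^{(2)}),
\end{equation*}
where $\mathcal{T}=(\mathbf{C}^{i}_{p})^{(0)} + \delta\Delta t(\mathcal{F}^p_+ + \mathcal{F}^p_-)^{(0)} + (1-\delta)\Delta t(\mathcal{F}^p_+ + \mathcal{F}^p_-)^{(1)}$ gathers the bounded explicit transport contributions (bounded because $(\mathbf{C}^i_p)^{(1)}$ converges). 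As $\varepsilon\to 0$ the first term vanishes and the middle term vanishes by the $\mathcal{O}(\varepsilon)$ bound just established, so the limit satisfies $\gamma\Delta t\,(\mathbf{C}^{i}_{p})^{(2)} = \gamma\Delta t\,\widetilde{\mathbf{M}}_{\mathbf{U}}(x^i_p,t^{(2)})$, i.e. $(\mathbf{C}^{i}_{p})^{(2)}\to\widetilde{\mathbf{M}}_{\mathbf{U}}(x^i_p,t^{(2)})$. Because the scheme is stiffly accurate, the final stage coincides with the update at $t^{\ell+1}$, which proves convergence to the corrected local Maxwellian for every $1\leq i\leq N_x$ and $1\leq p\leq k+1$.

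The main obstacle is not the limiting algebra but the self-referential nature of the target: $\widetilde{\mathbf{M}}_{\mathbf{U}}$ is parameterized by moments that are themselves extracted from $\mathbf{C}^i_p$, so one must rule out a circular $\varepsilon$-dependence in which the Maxwellian and the distribution chase one another. The discrete moment-consistency step of the first paragraph, which hinges entirely on the QCM enforcement that the collision moments vanish exactly, is what severs this loop and makes the stage-by-stage passage to the limit legitimate; I expect this verification, rather than the two subsequent algebraic limits, to be the crux of the argument.
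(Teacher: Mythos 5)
Your proof is correct and takes essentially the same route as the paper's: solve both implicit stages in closed form, observe that the first-stage relaxation residual $\widetilde{\mathbf{M}}_{\mathbf{U}}(x^i_p,t^{(1)}) - (\mathbf{C}^{i}_{p})^{(1)}$ is $\mathcal{O}(\varepsilon)$, and pass to the limit $\varepsilon\to 0$ term by term, identifying the nodal coefficients with the solution values. Your additional observations---that the QCM-enforced vanishing of the discrete collision moments keeps the stage moments (and hence the Maxwellian targets) free of stiff terms and uniformly bounded in $\varepsilon$, severing any circular dependence---make explicit a point the paper's proof leaves implicit, but do not alter the argument.
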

\begin{proof}
Let $i$ and $p$ be fixed, with $1\leq i\leq N_x$ and $1\leq p\leq k+1$ respectively. We define the $(\mathbf{A}^{i}_p)^{(\star)}$ terms, with $\star\in\{0,1\}$,
\begin{subequations}
\begin{align}
\begin{split}
    (\mathbf{A}^i_p)^{(0)}&=(\mathbf{C}^{i}_{p})^{(0)} + \gamma\Delta t(\mathcal{F}^p_+ + \mathcal{F}^p_-)^{(0)},
\end{split}
\\
\begin{split}
    (\mathbf{A}^i_p)^{(1)}&=(\mathbf{C}^{i}_{p})^{(0)} + \delta\Delta t(\mathcal{F}^p_+ + \mathcal{F}^p_-)^{(0)} + (1-\delta)\Delta t(\mathcal{F}^p_+ + \mathcal{F}^p_-)^{(1)}. 
\end{split}
\end{align}
\end{subequations}
Solving equations \eqref{eq: C_stage1} and \eqref{eq: C_stage2},
\begin{subequations}
\begin{align}
    \begin{split}\label{eq:C1}
    (\mathbf{C}^{i}_{p})^{(1)} &=\frac{\varepsilon^i_p}{\varepsilon^i_p+\gamma\Delta t}(\mathbf{A}^i_p)^{(0)} + \frac{\gamma\Delta t}{\varepsilon^i_p+\gamma\Delta t}\widetilde{\mathbf{M}}_{\mathbf{U}}(x^i_p,t^{(1)}),
    \end{split}
    \\
    \begin{split}\label{eq:C2}
    (\mathbf{C}^{i}_{p})^{(2)} &=\frac{\varepsilon^i_p}{\varepsilon^i_p+\gamma\Delta t}(\mathbf{A}^i_p)^{(1)} + \frac{(1-\gamma)\Delta t}{\varepsilon^i_p+\gamma\Delta  t}\left(\widetilde{\mathbf{M}}_{\mathbf{U}}(x^i_p,t^{(1)}) - (\mathbf{C}^{i}_{p})^{(1)}\right) + \frac{\gamma\Delta t}{\varepsilon^i_p+\gamma\Delta t}\widetilde{\mathbf{M}}_{\mathbf{U}}(x^i_p,t^{(2)}).
    \end{split}
\end{align}
\end{subequations}

Substituting \eqref{eq:C1} into \eqref{eq:C2},
\begin{equation}
    (\mathbf{C}^{i}_{p})^{(2)} =\frac{\varepsilon^i_p}{\varepsilon^i_p+\gamma\Delta t}(\mathbf{A}^i_p)^{(1)} + \frac{\varepsilon^i_p(1-\gamma)\Delta t}{(\varepsilon^i_p+\gamma\Delta  t)^2}\left(\widetilde{\mathbf{M}}_{\mathbf{U}}(x^i_p,t^{(1)}) - (\mathbf{A}^i_p)^{(0)}\right) + \frac{\gamma\Delta t}{\varepsilon^i_p+\gamma\Delta t}\widetilde{\mathbf{M}}_{\mathbf{U}}(x^i_p,t^{(2)}). 
\end{equation}
Taking the limit $\varepsilon\to 0$ (and hence $\varepsilon^i_p\to 0$), the proof is complete since the coefficients are identical to the nodal values.
\end{proof}

Due to the velocity discretization and QCM procedure to correct the moments discretely, showing the AP property is more subtle and a topic of future investigation. In summary, we evolve the solution according to the following flow chart; subscript $\star$ denotes the moments \textit{before} the post-processing slope limiter.
\[\left\{(\mathbf{C}^i_p)^{(0)}\right\}\xrightarrow[\text{Eqn. \eqref{eq: U_stage1}}]{}\left\{(\mathbf{U}^i_p)^{(1)}_{\star}\right\}\xrightarrow[\text{Slope limiter}]{}\left\{(\mathbf{U}^i_p)^{(1)}\right\}\xrightarrow[\text{QCM}]{\text{Correct with}}\left\{(\mathbf{U}^i_p)^{(1)}\right\}\xrightarrow[\text{Eqn. \eqref{eq: C_stage1}}]{}\left\{(\mathbf{C}^i_p)^{(1)}\right\}\]
\[\left\{(\mathbf{C}^i_p)^{(1)}\right\}\xrightarrow[\text{Eqn. \eqref{eq: U_stage2}}]{}\left\{(\mathbf{U}^i_p)^{(2)}_{\star}\right\}\xrightarrow[\text{Slope limiter}]{}\left\{(\mathbf{U}^i_p)^{(2)}\right\}\xrightarrow[\text{QCM}]{\text{Correct with}}\left\{(\mathbf{U}^i_p)^{(2)}\right\}\xrightarrow[\text{Eqn. \eqref{eq: C_stage2}}]{}\left\{(\mathbf{C}^i_p)^{(2)}\right\}\]

\section{Numerical experiments}\label{sec: numerics}

We test the proposed nodal DG method for solving the BGK equation \eqref{eq: BGK}. The proposed scheme is tested on different sets of initial conditions to demonstrate its high-order accuracy, computational efficiency, and robustness. As a result of the QCM enforcing the discrete conservation of the moments, the scheme conserves mass, momentum, and energy up to the truncation tolerance $\vartheta$. Unless otherwise stated, the truncation tolerance is set to $\vartheta=10^{-6}$; larger tolerances can be used but should be smaller than the local truncation error. We consider piecewise polynomial spaces with degree at most $k=0,1,2$, corresponding to $k+1$ point Gauss-Legendre quadratures. Let NDG1, NDG2, and NDG3 signify the nodal DG discretizations with order $k+1$, for $k=0,1,2$. We only consider the second-order IMEX RK scheme in Section \ref{sec: IMEXRK}, and the time-stepping size is defined according to a CFL condition $\Delta t=C_{\text{CFL}}h_x/V_{\text{max}}$, with $C_{\text{CFL}}=1/(2k+3)$. The velocity domain $[-V_{\text{max}},V_{\text{max}}]$ is set large enough to allow sufficient decay of the distribution function. And, when determining the initial states of the discontinuous tests, the specific heat ratio $\gamma=(d+2)/d=2$, where $d=2$ is the translational degrees of freedom in velocity space. All numerical tests were run in MATLAB R2024a on a MacBook Pro with the Apple M2 Pro Processor (3.49 GHz).

\subsection{Smooth example}

We demonstrate the accuracy and computational complexity with a smooth solution in which the initial condition is a Maxwellian distribution determined by the following initial states:
\begin{equation}\label{eq: smoothIC}
    n(x,0)=1+0.5\sin{(x)},\qquad u_x(x,0)=u_y(x,0)=0,\qquad T(x,0)=1,
\end{equation}
with periodic boundary conditions in $x$, and physical domain $\Omega_x=[0,2\pi]$. The velocity domain is set with $V_{\text{max}}=10$ and discretized using $N_v=100$ uniformly spaced points in each direction. We use a non-uniform mesh in physical space $x$ with an even number of elements $N_x$. The mesh is constructed by expressing $\Omega_x$ as two disjoint subintervals $[0,4\pi/3]\cup[4\pi/3,2\pi]$, using $N_x/2$ uniform cells for both subregions.

Since there is no closed form exact solution, we test the order of accuracy under mesh refinement as in \cite{xiong2015high}, where the $L^1$ error is defined as the difference of the numerical solution between consecutive refinements,
\begin{equation}
    \text{$L^1$ error of $f[h_x]$} = \frac{h_v^2}{2\pi}\sum\limits_{i,j,k}{\int_{I_i}{\Big|f[h_x](x,v_{x,j},v_{y,k},t_f) - f[h_x/2](x,v_{x,j},v_{y,k},t_f)\Big|dx}},
\end{equation}
where $t_f$ is the final time, and the integrals in $x$ are computed using Guassian quadrature with the nodal values. To demonstrate the spatial order of accuracy from the nodal DG discretization in $x$, Table \ref{table: errortable1} presents the $L^1$ error, with $t_f=0.001$, $\varepsilon\in\{1,10^{-2},10^{-6}\}$, and no limiter applied. Due to the small final time, the error is dominated by the spatial discretization, and the expected order of accuracy is observed. We only show results for NDG2 and NDG3, but we observed first-order accuracy with NDG1 in our experiments.

\begin{table}[h!]
\centering
\caption{Convergence study using IC \eqref{eq: smoothIC}, velocity mesh $N_v=100$, and final time $t_f=0.001$. Non-uniform mesh and no limiter applied.}
\label{table: errortable1}
\begin{tabular}{cccccccccccccccc}
\cline{1-15}
     &  &       &  & \multicolumn{3}{c}{$\varepsilon=1.0$} &  & \multicolumn{3}{c}{$\varepsilon=10^{-2}$} &  & \multicolumn{3}{c}{$\varepsilon=10^{-6}$} &  \\ \cline{5-7} \cline{9-11} \cline{13-15}
     &  & $N_x$ &  & $L^1$ error      &       & order      &  & $L^1$ error        &        & order       &  & $L^1$ error        &        & order       &  \\ \cline{1-15}
     &  & 16    &  & 3.60E-02         &       & -          &  & 3.60E-02           &        & -           &  & 3.60E-02           &        & -           &  \\
     &  & 32    &  & 1.79E-02         &       & 1.01       &  & 1.79E-02           &        & 1.01        &  & 1.79E-02           &        & 1.01        &  \\
NDG2 &  & 64    &  & 1.42E-04         &       & 2.00       &  & 1.42E-04           &        & 2.00        &  & 1.42E-04           &        & 2.00        &  \\
     &  & 128   &  & 3.54E-05         &       & 2.00       &  & 3.54E-05           &        & 2.00        &  & 3.54E-05           &        & 2.00        &  \\
     &  & 256   &  & 8.85E-06         &       & 2.00       &  & 8.85E-06           &        & 2.00        &  & 8.85E-06           &        & 2.00        &  \\ \cline{1-15}
     &  & 16    &  & 1.03E-04         &       & -          &  & 1.03E-04           &        & -           &  & 1.03E-04           &        & -           &  \\
     &  & 32    &  & 1.28E-05         &       & 3.00       &  & 1.28E-05           &        & 3.00        &  & 1.28E-05           &        & 3.00        &  \\
NDG3 &  & 64    &  & 1.61E-06         &       & 2.99       &  & 1.61E-06           &        & 2.99        &  & 1.61E-06           &        & 2.99        &  \\
     &  & 128   &  & 2.03E-07         &       & 2.99       &  & 2.03E-07           &        & 2.99        &  & 2.04E-07           &        & 2.99        &  \\
     &  & 256   &  & 2.58E-08         &       & 2.98       &  & 2.58E-08           &        & 2.98        &  & 2.59E-08           &        & 2.98        &  \\ \cline{1-15}
\end{tabular}
\end{table}

Although the slope limiter is typically not activated for this smooth problem since there are no troubled elements, we test the accuracy when applying the slope limiter to each element. Table \ref{table: errortable_weno} presents the $L^1$-error using the same parameters as the previous example, while also applying the WENO limiter with parameters $\gamma_0=\gamma_2=0.5\times10^{-8}$ and $\gamma_1=1-10^{-8}$. As stated in \cite{zhong2013simple}, when choosing the linear weights for the WENO method, larger values of the ratios $\gamma_1/\gamma_0$ and $\gamma_2/\gamma_0$ are known to yield better results for smooth problems. The results in Table \ref{table: errortable_minmod} using the minmod limiter only show second-order accuracy since the minmod limiter uses a linear interpolant to approximate the moments at the nodal values, as per Remark \ref{ref: limiters}. The collision operator is stronger for smaller $\varepsilon$. Thus, the error incurred by approximating the moments at the nodal values using the linear interpolant plays a bigger role, explaining why we see the accuracy using NDG3 drop to second-order faster for $\varepsilon=10^{-6}$.

\begin{table}[h!]
\centering
\caption{Convergence study using IC \eqref{eq: smoothIC}, velocity mesh $N_v=100$, and final time $t_f=0.001$. Non-uniform mesh and WENO limiter applied.}
\label{table: errortable_weno}
\begin{tabular}{cccccccccccccccc}
\cline{1-15}
     &  &       &  & \multicolumn{3}{c}{$\varepsilon=1.0$} &  & \multicolumn{3}{c}{$\varepsilon=10^{-2}$} &  & \multicolumn{3}{c}{$\varepsilon=10^{-6}$} &  \\ \cline{5-7} \cline{9-11} \cline{13-15}
     &  & $N_x$ &  & $L^1$ error      &       & order      &  & $L^1$ error        &        & order       &  & $L^1$ error        &        & order       &  \\ \cline{1-15}
     &  & 16    &  & 1.78E-03         &       & -          &  & 1.99E-03           &        & -           &  & 4.37E-03           &        & -           &  \\
     &  & 32    &  & 4.44E-04         &       & 2.00       &  & 4.68E-04           &        & 2.09        &  & 7.87E-04           &        & 2.47        &  \\
NDG2 &  & 64    &  & 1.11E-04         &       & 2.00       &  & 1.12E-04           &        & 2.06        &  & 1.38E-04           &        & 2.51        &  \\
     &  & 128   &  & 2.77E-05         &       & 2.00       &  & 2.77E-05           &        & 2.02        &  & 2.88E-05           &        & 2.26        &  \\
     &  & 256   &  & 6.92E-06         &       & 2.00       &  & 6.92E-06           &        & 2.00        &  & 6.96E-06           &        & 2.05        &  \\ \cline{1-15}
     &  & 16    &  & 5.79E-05         &       & -          &  & 3.02E-04           &        & -           &  & 2.69E-03           &        & -           &  \\
     &  & 32    &  & 7.29E-06         &       & 2.99       &  & 3.54E-05           &        & 3.09        &  & 3.71E-04           &        & 2.86        &  \\
NDG3 &  & 64    &  & 9.13E-07         &       & 3.00       &  & 2.76E-06           &        & 3.68        &  & 3.34E-05           &        & 3.47        &  \\
     &  & 128   &  & 1.15E-07         &       & 2.99       &  & 1.86E-07           &        & 3.90        &  & 1.61E-06           &        & 4.38        &  \\
     &  & 256   &  & 1.46E-08         &       & 2.98       &  & 1.68E-08           &        & 3.47        &  & 6.32E-08           &        & 4.67        &  \\ \cline{1-15}
\end{tabular}
\end{table}

\begin{table}[h!]
\centering
\caption{Convergence study using IC \eqref{eq: smoothIC}, velocity mesh $N_v=100$, and final time $t_f=0.001$. Non-uniform mesh and minmod limiter applied.}
\label{table: errortable_minmod}
\begin{tabular}{cccccccccccccccc}
\cline{1-15}
     &  &       &  & \multicolumn{3}{c}{$\varepsilon=1.0$} &  & \multicolumn{3}{c}{$\varepsilon=10^{-2}$} &  & \multicolumn{3}{c}{$\varepsilon=10^{-6}$} &  \\ \cline{5-7} \cline{9-11} \cline{13-15}
     &  & $N_x$ &  & $L^1$ error      &       & order      &  & $L^1$ error        &        & order       &  & $L^1$ error        &        & order       &  \\ \cline{1-15}
     &  & 16    &  & 2.28E-03         &       & -          &  & 2.54E-03           &        & -           &  & 5.89E-03           &        & -           &  \\
     &  & 32    &  & 5.67E-04         &       & 2.01       &  & 6.04E-04           &        & 2.02        &  & 1.20E-03           &        & 2.30        &  \\
NDG2 &  & 64    &  & 1.42E-04         &       & 2.00       &  & 1.50E-04           &        & 2.01        &  & 2.65E-04           &        & 2.17        &  \\
     &  & 128   &  & 3.54E-05         &       & 2.00       &  & 3.75E-05           &        & 2.00        &  & 6.25E-05           &        & 2.08        &  \\
     &  & 256   &  & 8.85E-06         &       & 2.00       &  & 9.36E-06           &        & 2.00        &  & 1.54E-05           &        & 2.02        &  \\ \cline{1-15}
     &  & 16    &  & 1.04E-04         &       & -          &  & 5.30E-04           &        & -           &  & 4.70E-03           &        & -           &  \\
     &  & 32    &  & 1.33E-05         &       & 2.96       &  & 8.36E-05           &        & 2.66        &  & 7.75E-04           &        & 2.60        &  \\
NDG3 &  & 64    &  & 1.74E-06         &       & 2.94       &  & 1.46E-05           &        & 2.52        &  & 1.44E-04           &        & 2.43        &  \\
     &  & 128   &  & 2.30E-07         &       & 2.92       &  & 2.87E-06           &        & 2.35        &  & 3.03E-05           &        & 2.24        &  \\
     &  & 256   &  & 3.19E-08         &       & 2.85       &  & 6.27E-07           &        & 2.19        &  & 7.28E-06           &        & 2.06        &  \\ \cline{1-15}
\end{tabular}
\end{table}

\begin{table}[h!]
\centering
\caption{Comparison of CPU runtimes using initial condition \eqref{eq: smoothIC}, spatial mesh $N_x=40$, and final time $t_f=0.1$. Runtimes are normalized to the runtime of the $N_v=32$, NDG1 case, which was 7.8985 seconds.}
\label{table: runtimes}
\begin{tabular}{|c|c|c|c|}
\hline
$N_v$ & NDG1  & NDG2 & NDG3 \\ \hline
32     & 1.000 & 3.910 & 9.624 \\
64     & 0.996  & 4.005  & 9.779 \\
128    & 1.040 & 4.259 & 10.815 \\
256    & 1.350 & 6.286 & 12.277 \\
512    & 1.764 & 8.111 & 15.854 \\
1024   & 2.889 & 11.636 & 22.365 \\
\hline
\end{tabular}
\end{table}

Due to the low-rank phase space representation, the computational cost is significantly reduced. Algorithm \ref{algo: qrsvd} demands $\sim N_vr^2$ from the reduced QR factorizations and matrix multiplications, and $\sim r^3$ flops from the SVD. Looking at equations \eqref{eq: Cipmn_v2}-\eqref{eq: Cip_v2_fluxes}, Algorithm \ref{algo: qrsvd} is applied $4(k+1)+3$ times per node. Assuming $r\ll N_v$ and very small $k\in\{0,1,2\}$, the cost of evaluating the righthand side of equation \eqref{eq: Cipmn_v2} is dominated by $\sim N_vr^2$ flops. Under these same assumptions, the cost of solving the moment equations (per node) is also dominated by $\sim N_vr^2$ flops due to the \texttt{LRDI} function. Since we are working on $N_x(k+1)$ nodes, the overall computational complexity under the low-rank assumption is dominated by $\mathcal{O}(N_xN_vr^2)$ flops. Notably, the computational complexity is only linear with respect to $N_v$. Table \ref{table: runtimes} presents the CPU runtime under velocity mesh refinement, where we use spatial mesh $N_x=40$, $k\in\{0,1,2\}$, and final time $t_f=0.1$. For NDG1, NDG2, and NDG3, the observed complexity is actually sub-linear, although further refinement should theoretically approach linear complexity. The increase in CPU runtime from NDG1 to NDG2 to NDG3 is due to the growth in $k$ from Algorithm \ref{algo: qrsvd}, as well as working on $k+1$ nodes per element.

\begin{figure}[h!]
\begin{minipage}[b]{0.47\linewidth}
\caption{Absolute error of the total moments. NDG3, $N_x=N_v=100$, $\varepsilon=1$, truncation tolerance $\vartheta=10^{-6}$.}
\label{fig: smooth_momenterror_eps1_tolE-6}
\includegraphics[width=\textwidth]{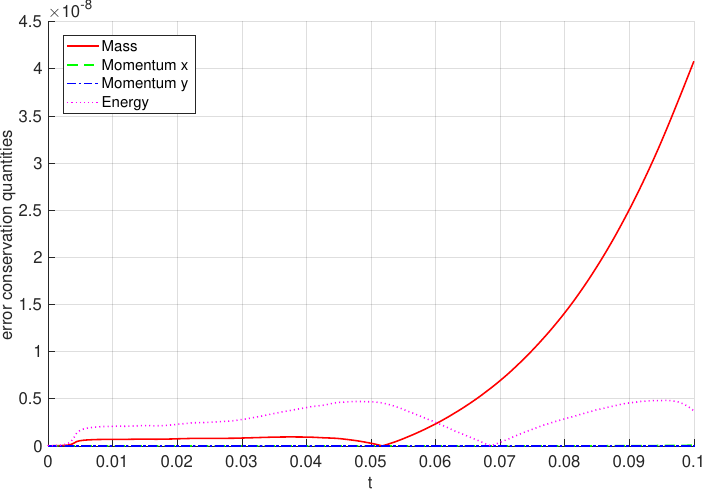}
\end{minipage}
\begin{minipage}[b]{0.05\linewidth}
\end{minipage}
\begin{minipage}[b]{0.47\linewidth}
\caption{Absolute error of the total moments. NDG3, $N_x=N_v=100$, $\varepsilon=10^{-6}$, truncation tolerance $\vartheta=10^{-6}$.}
\label{fig: smooth_momenterror_epsE-6_tolE-6}
\includegraphics[width=\textwidth]{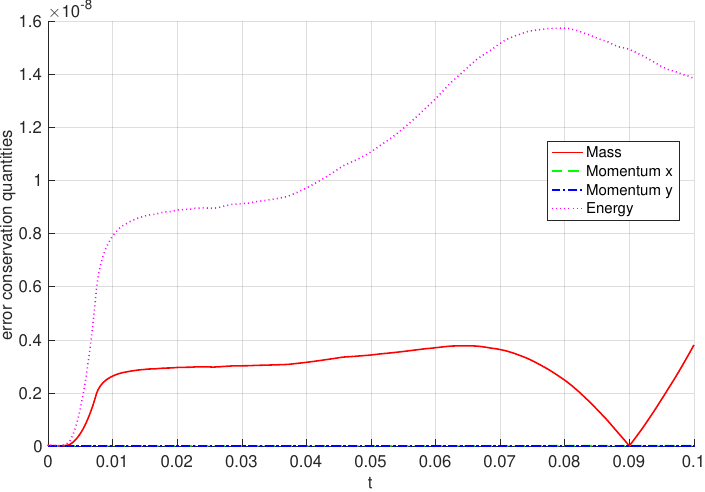}
\end{minipage}
\end{figure}

\begin{figure}[h!]
\begin{minipage}[b]{0.47\linewidth}
\caption{Absolute error of the total moments. NDG3, $N_x=N_v=100$, $\varepsilon=1$, truncation tolerance $\vartheta=10^{-15}$.}
\label{fig: smooth_momenterror_eps1_tolE-15}
\includegraphics[width=\textwidth]{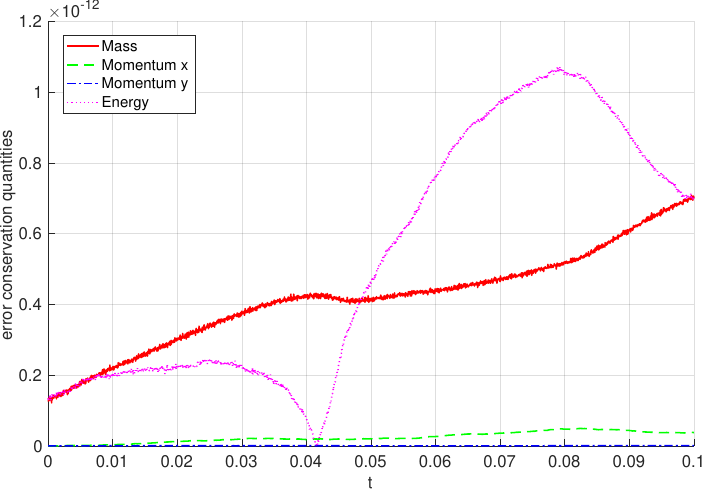}
\end{minipage}
\begin{minipage}[b]{0.05\linewidth}
\end{minipage}
\begin{minipage}[b]{0.47\linewidth}
\caption{Absolute error of the total moments. NDG3, $N_x=N_v=100$, $\varepsilon=10^{-6}$, truncation tolerance $\vartheta=10^{-15}$.}
\label{fig: smooth_momenterror_epsE-6_tolE-15}
\includegraphics[width=\textwidth]{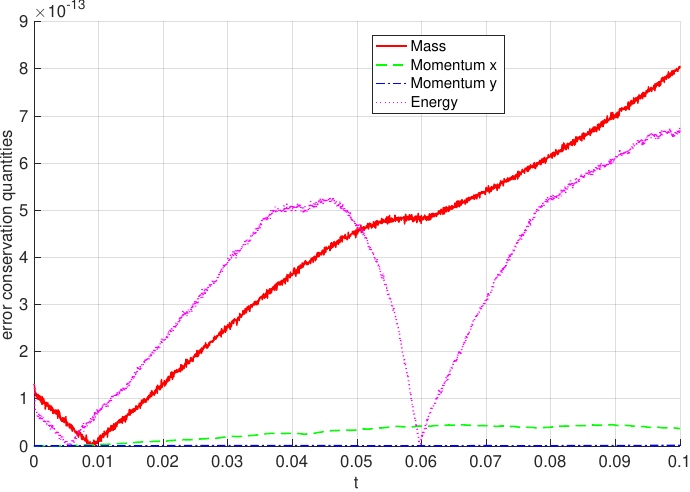}
\end{minipage}
\end{figure}

For this smooth problem, the total mass, momentum, and energy are  $\int_{\Omega_x}{\left\langle\big(1,v_x,v_y,(v_x^2+v_y^2)/2)\big)^Tf\right\rangle dx} = \int_{\Omega_x}{\mathbf{U}(x,t)dx} = (2\pi,0,0,2\pi)^T$. Since the QCM conserves the moments at the discrete level, the mass, momentum, and energy are conserved up to the truncation tolerance $\vartheta$. Figures \ref{fig: smooth_momenterror_eps1_tolE-6}-\ref{fig: smooth_momenterror_epsE-6_tolE-15} show how well the proposed method conserves the total moments by plotting the absolute errors using NDG3, $\varepsilon\in\{1,10^{-6}\}$, final time $t_f=0.1$, spatial mesh $N_x=100$, and velocity mesh $N_v=100$. Figures \ref{fig: smooth_momenterror_eps1_tolE-6}-\ref{fig: smooth_momenterror_epsE-6_tolE-6} set $\vartheta=10^{-6}$, and while momentum is well-conserved by setting $u_x=u_y=0$, mass and energy conservation is affected by the truncation tolerance. Figures \ref{fig: smooth_momenterror_eps1_tolE-15}-\ref{fig: smooth_momenterror_epsE-6_tolE-15} set $\vartheta=10^{-15}$, and the moments are conserved almost to machine precision for both $\varepsilon=1$ and $\varepsilon=10^{-6}$.

\subsection{Standing shock problem}
We now consider a standing shock problem similar to the Mach 5 steady-state shock example in \cite{taitano2018adaptive,taitano2021cpc}, using a uniform spatial mesh. The initial distribution is a Maxwellian distribution defined by the following Riemann data: upstream conditions $n_0=0.62963$, $u_{x,0}=1.63712$, $u_{y,0}=0$, $T_0=0.595588$, and downstream conditions $n_1=1$, $u_{x,1}=1.03078$, $u_{y,1}=0$, $T_1=1$. The initial profiles are smoothed using hyperbolic tangents,
\begin{equation}
    n(x,0)=a_n\tanh{[\xi(x-\pi)]}+b_n,\quad u_{x}(x,0)=a_{u_x}\tanh{[\xi(x-\pi)]}+b_{u_x},\quad T(x,0)=a_T\tanh{[\xi(x-\pi)]}+b_T,
\end{equation}
where $\xi=1/h_x$ is the smoothing factor, and $a_{\phi}=(\phi_1-\phi_0)/2$, $b_{\phi}=(\phi_1+\phi_0)/2$. We also set the Knudsen number to $\varepsilon=10^{-13}$, $V_{\text{max}}=12$, $\Omega_x=[0,2\pi]$, and mesh $N_x=N_v=100$. We compute the numerical solutions with NDG2 up to $t_f=4$, by which time the shock profile has corrected itself from the hyperbolic tangents. To demonstrate the effectiveness of the post-processing slope limiters, we compare the numerical results using no limiter, the minmod limiter \cite{cockburn1999discontinuous,xiong2015high}, and the WENO limiter \cite{zhong2013simple} with parameters $\gamma_0=\gamma_2=0.001$ and $\gamma_1=0.998$. As seen in Figure \ref{fig: moments_SS}, significant oscillations occur when no slope limiter is used. Whereas, the minmod and WENO limiters both control spurious oscillations, with the WENO limiter performing slightly better. Although not shown, the bulk velocity in the $v_y$ direction, $u_y(x,t)$, remained zero (up to machine precision) for all three cases, but recall that $u_y(x,t)=0$.

\begin{figure}[h!]
\caption{Standing shock problem using NDG2, $t_f=4$, $N_x=N_v=100$, $\varepsilon=10^{-13}$. Left column: number density $n(x,t)$. Middle column: bulk velocity $u_x(x,t)$. Right column: temperature $T(x,t)$. Top row: no limiter. Middle row: minmod limiter. Bottom row: WENO limiter.}
\label{fig: moments_SS}
\begin{minipage}[b]{0.32\linewidth}
    \includegraphics[width=\textwidth]{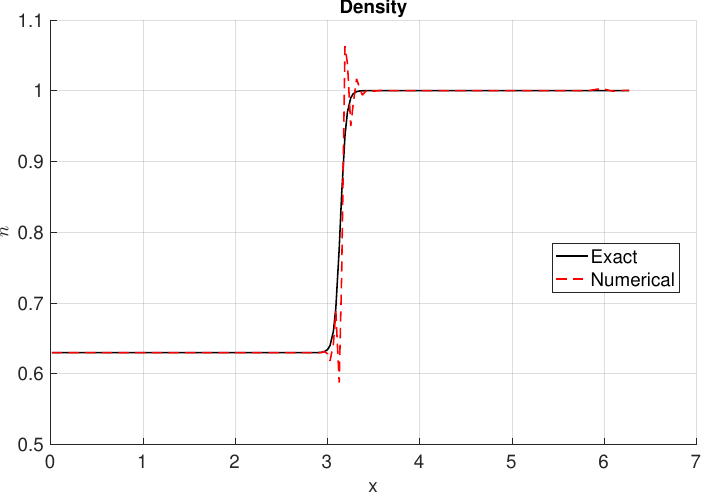}
    \includegraphics[width=\textwidth]{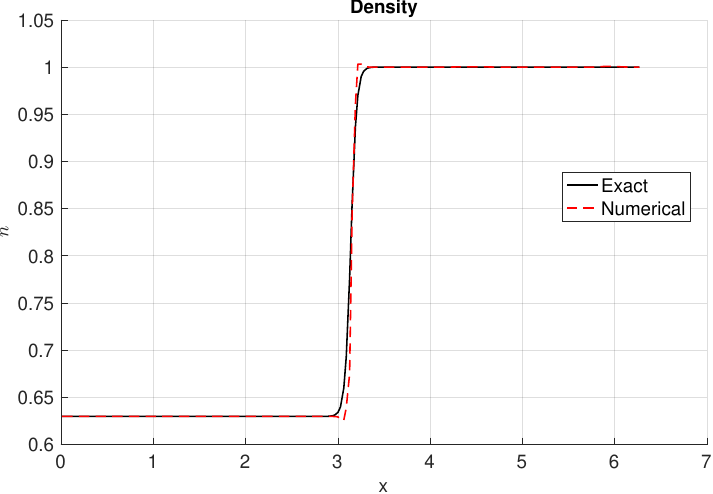}
    \includegraphics[width=\textwidth]{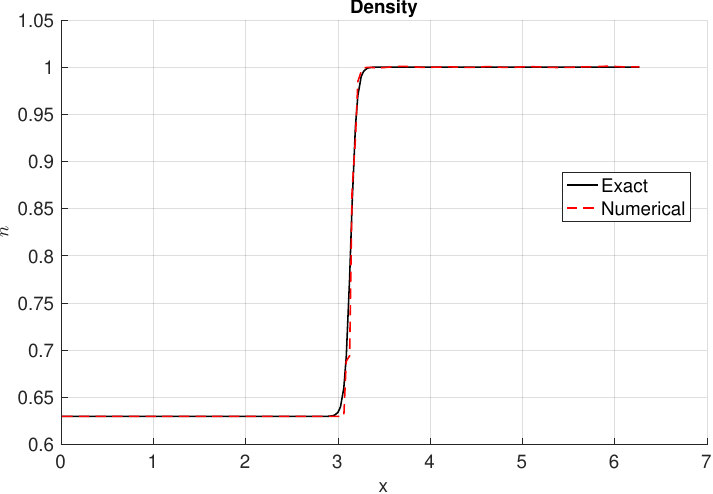}
\end{minipage}
\begin{minipage}[b]{0.32\linewidth}
    \includegraphics[width=\textwidth]{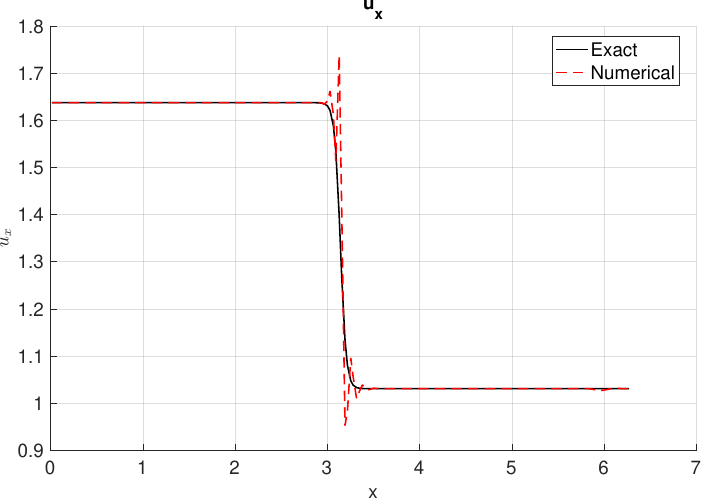}
    \includegraphics[width=\textwidth]{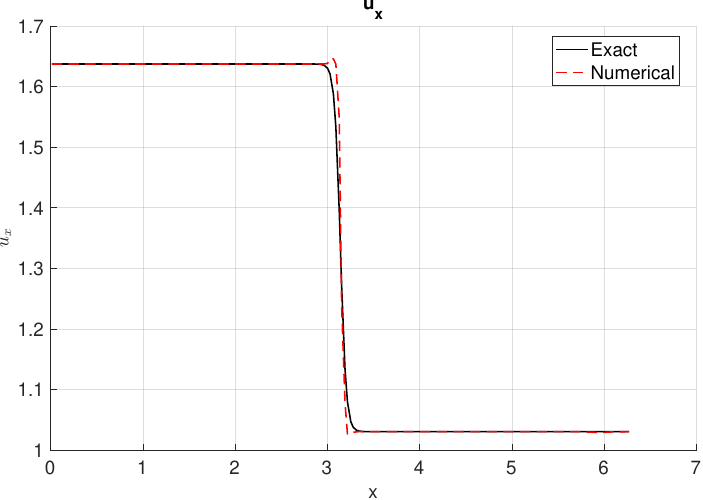}
    \includegraphics[width=\textwidth]{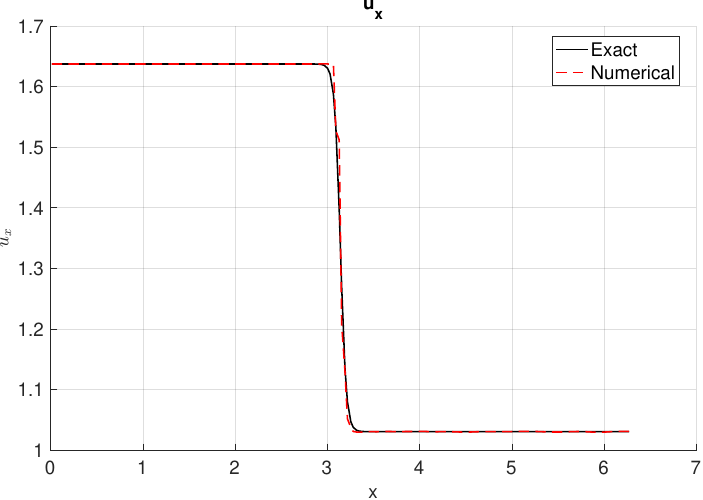}
\end{minipage}
\begin{minipage}[b]{0.32\linewidth}
    \includegraphics[width=\textwidth]{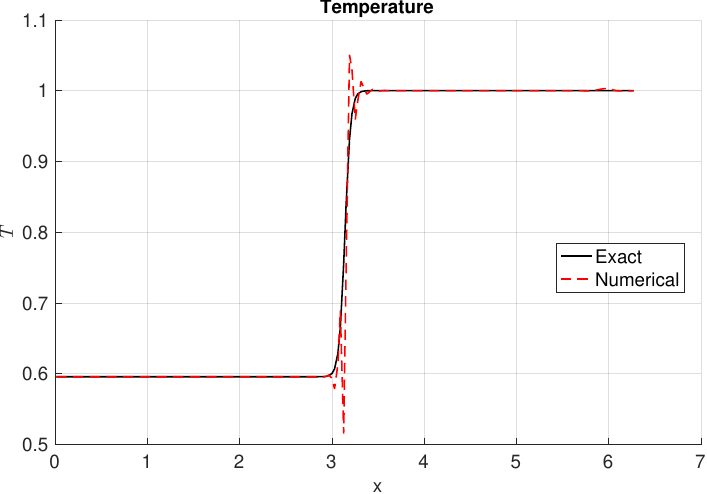}
    \includegraphics[width=\textwidth]{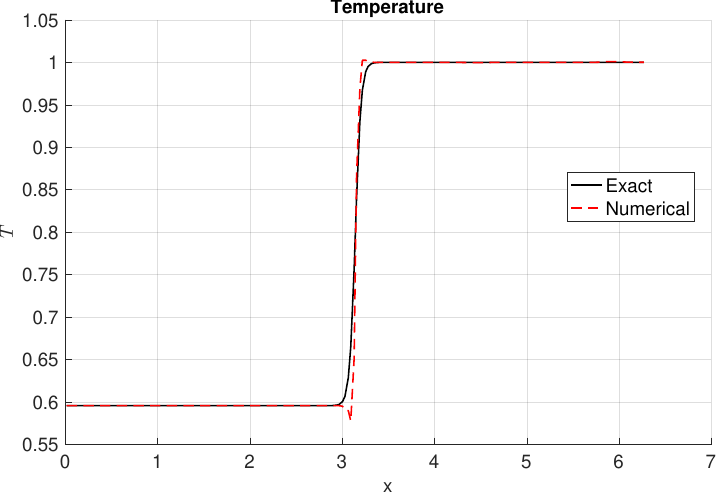}
    \includegraphics[width=\textwidth]{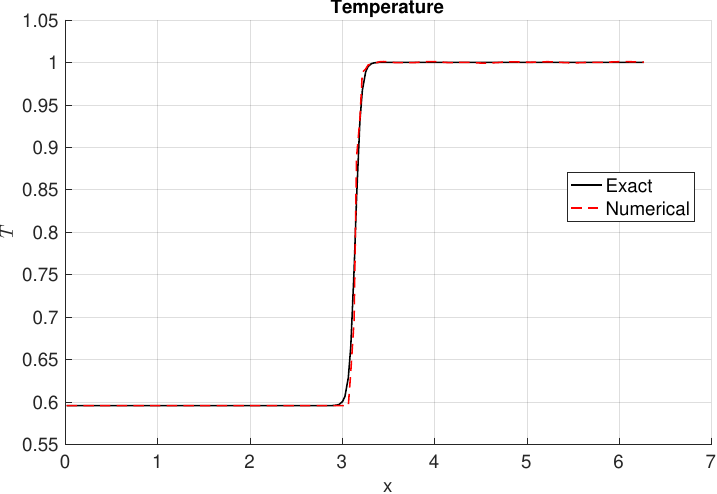}
\end{minipage}
\end{figure}

\subsection{1d2v Sod shock tube}

We consider the Sod shock tube problem in 1d2v over the domain $\Omega_x\times\Omega_v=[0,1]\times[-5,5]^2$, with an initial Maxwellian distribution defined by the following initial states:
\begin{equation}
    \Big(n(x,0),u_x(x,0),u_y(x,0),p(x,0)\Big) = 
    \begin{cases}
        (1,0,0,1),&0\leq x\leq 1/2,\\
        (1/8,0,0,0.1),&1/2\leq x\leq 1,
    \end{cases}
\end{equation}
where $p=nT$ is the pressure. The energy when dealing with an ideal polytropic gas is $E=\frac{n|\mathbf{u}|^2}{2}+\frac{p}{\gamma-1}$ from the constitutive equation relating the pressure and internal energy. This is consistent with the energy equation in \eqref{eq: U} since $\gamma=(d+2)/d$. In the following results, the numerical solutions are computed using NDG2, uniform meshes $N_x=150$ and $N_v=100$, and up to time $t_f=0.14$. As with the standing shock problem, we start by demonstrating the effectiveness of the post-processing slope limiters. As seen in Figure \ref{fig: moments_Sod}, oscillations appear when no limiter is used -particularly for the bulk velocity and temperature-, but the minmod and WENO (with $\gamma_0=\gamma_2=0.001$ and $\gamma_1=0.998$) limiters both do a comparable job controlling the oscillations. We also plot the moments for varying Knudsen numbers $\varepsilon\in\{1,10^{-1},10^{-2},10^{-3},10^{-6},10^{-13}\}$ in Figure \ref{fig: moments_Sod_varyKnudsen} using the WENO limiter; the results using the minmod limiter are similar. The results are consistent with those shown in \cite{bennoune2008uniformly,degond2005smooth,xiong2015high}, although our numerical solution for $\varepsilon=10^{-13}$ has slight oscillatory behavior near the discontinuity in the temperature. In all of our Sod shock tube simulations, the bulk velocity in the $v_y$ direction, $u_y(x,t)$, remained zero (up to machine precision) since we set $u_y(x,t)=0$.

\begin{figure}[h!]
\caption{Sod shock tube problem using NDG2, $t_f=0.14$, $N_x=150$, $N_v=100$, $\varepsilon=10^{-13}$. Left column: number density $n(x,t)$. Middle column: bulk velocity $u_x(x,t)$. Right column: temperature $T(x,t)$. Top row: no limiter. Middle row: minmod limiter. Bottom row: WENO limiter.}
\label{fig: moments_Sod}
\begin{minipage}[b]{0.32\linewidth}
    \includegraphics[width=\textwidth]{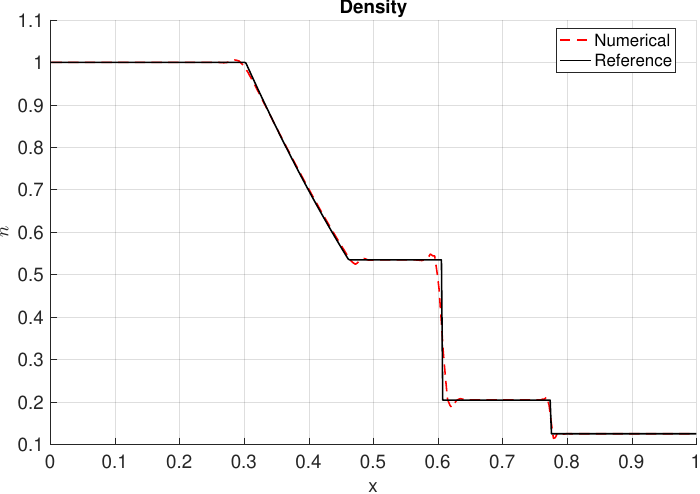}
    \includegraphics[width=\textwidth]{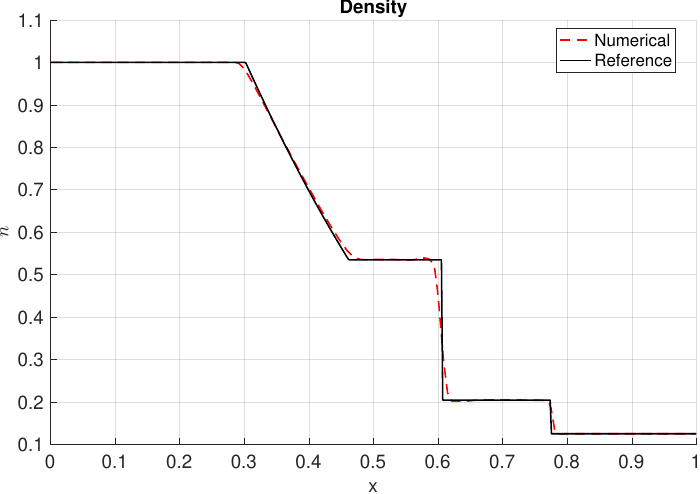}
    \includegraphics[width=\textwidth]{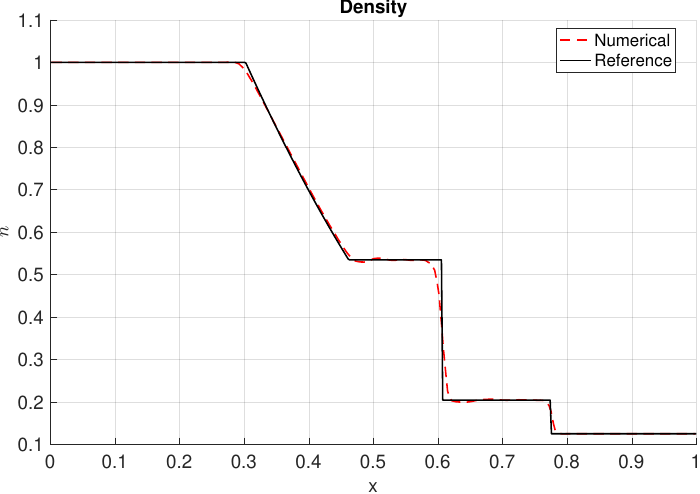}
\end{minipage}
\begin{minipage}[b]{0.32\linewidth}
    \includegraphics[width=\textwidth]{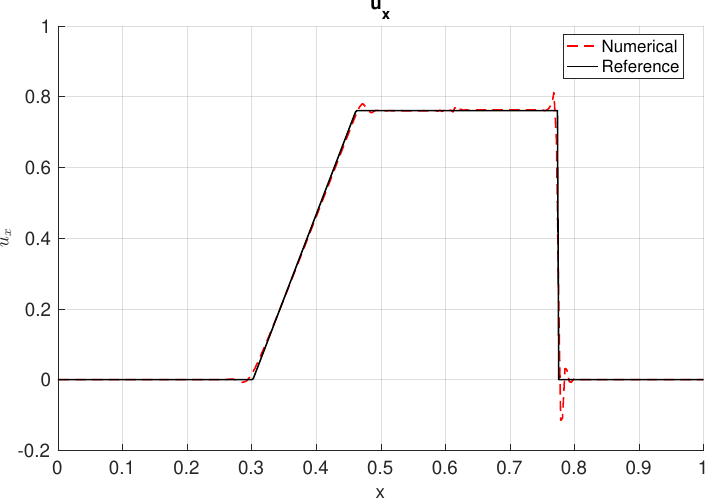}
    \includegraphics[width=\textwidth]{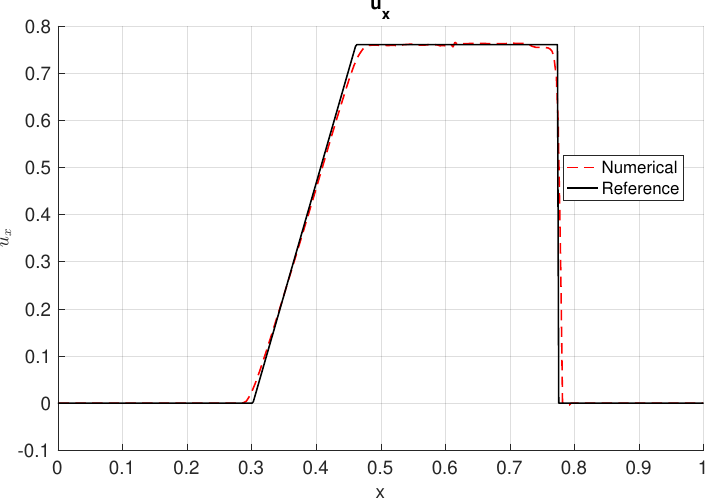}
    \includegraphics[width=\textwidth]{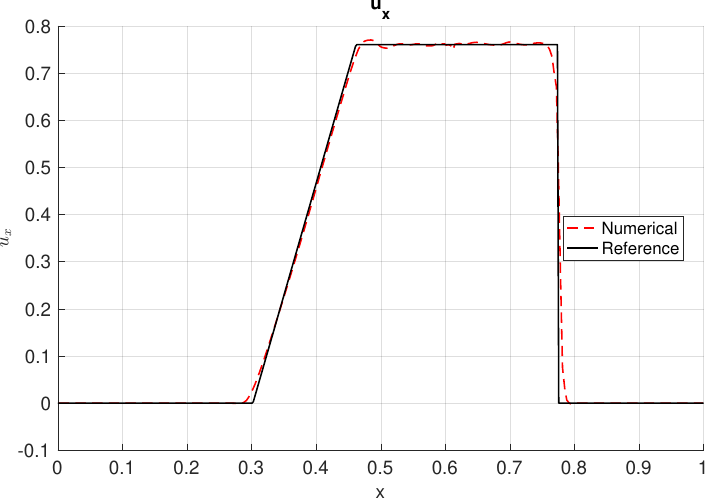}
\end{minipage}
\begin{minipage}[b]{0.32\linewidth}
    \includegraphics[width=\textwidth]{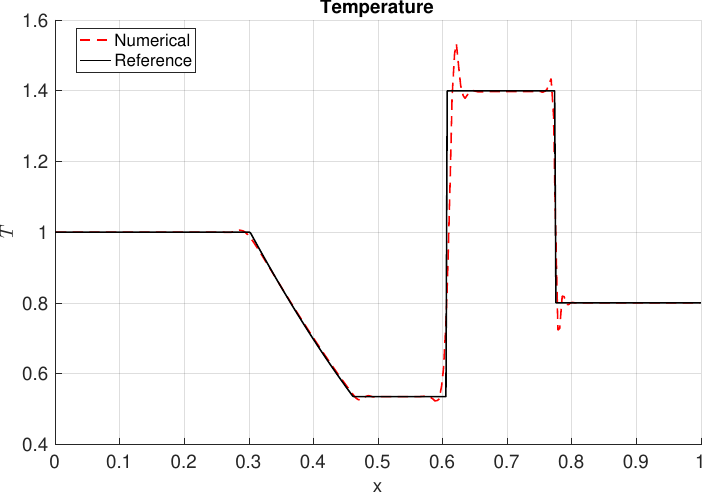}
    \includegraphics[width=\textwidth]{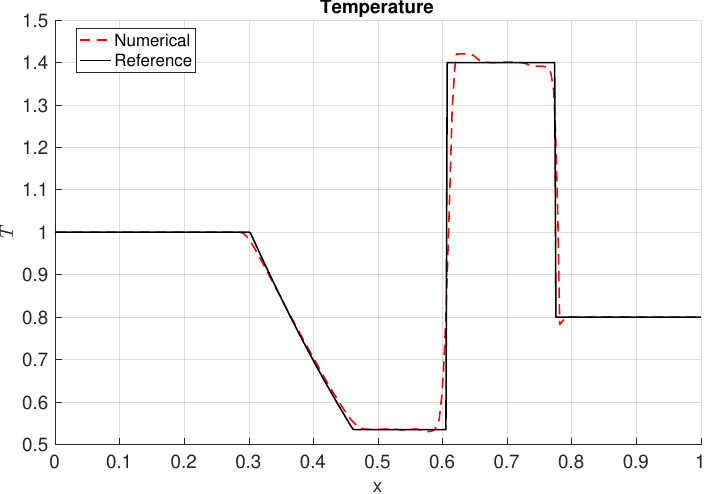}
    \includegraphics[width=\textwidth]{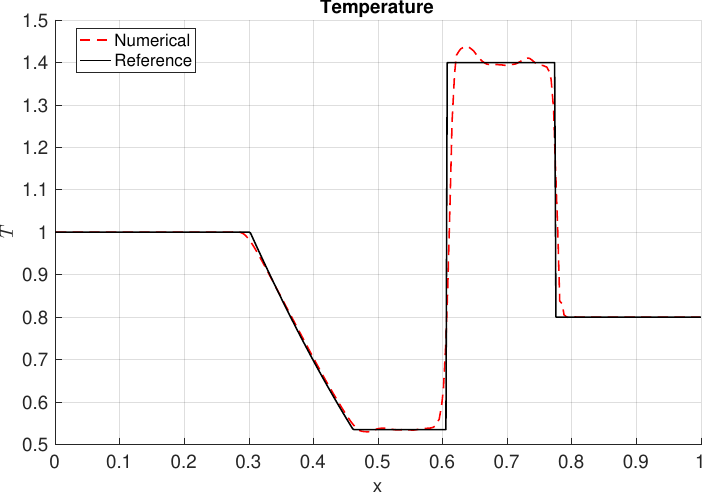}
\end{minipage}
\end{figure}

The main advantage of the proposed method over full-rank nodal DG methods is the reduced computational cost resulting from the low-rank methodology. We discussed and verified the computational complexity of the method for the smooth problem \eqref{eq: smoothIC}. For the Sod shock tube problem, we also show the rank of the solution (in velocity space) at all the nodal points in physical space. We use NDG2 and WENO limiter with parameters $\gamma_0=\gamma_2=0.001$ and $\gamma_1=0.998$, fix the spatial mesh $N_x=150$, and run the solution up to time $t_f=0.14$. Figure \ref{fig: ranks_Sod_NDG2} shows that for moderate Knudsen numbers $\varepsilon\in\{10^{-3},10^{-6}\}$, the rank slightly increased near the locations of the discontinuities where the solution transitions from one state to another; this also occurs if the spatial mesh is too coarse. Although we do not include the plot, the solution remained uniformly rank-1 when $\varepsilon=10^{-13}$, demonstrating the asymptotic preservation of the kinetic solution in the limit $\varepsilon\rightarrow 0$, in which case we know the solution must be a rank-1 Maxwellian distribution at all spatial nodes.

\begin{figure}[h!]
\caption{Sod shock tube problem using NDG2, WENO limiter, $t_f=0.14$, $N_x=150$, $N_v=100$, and varying Knudsen number $\varepsilon\in\{1,10^{-1},10^{-2},10^{-3},10^{-6},10^{-13}\}$. Left: number density $n(x,t)$. Middle: bulk velocity $u_x(x,t)$. Right: temperature $T(x,t)$.}
\label{fig: moments_Sod_varyKnudsen}
\begin{minipage}[b]{0.32\linewidth}
    \includegraphics[width=\textwidth]{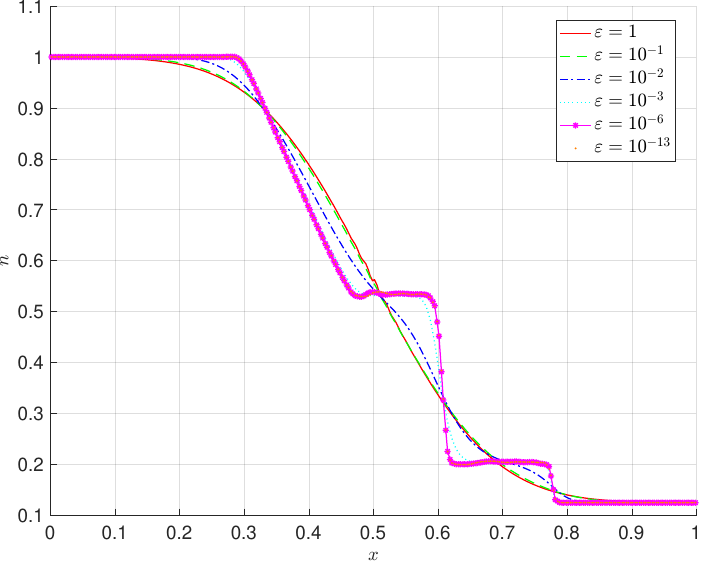}
\end{minipage}
\begin{minipage}[b]{0.32\linewidth}
    \includegraphics[width=\textwidth]{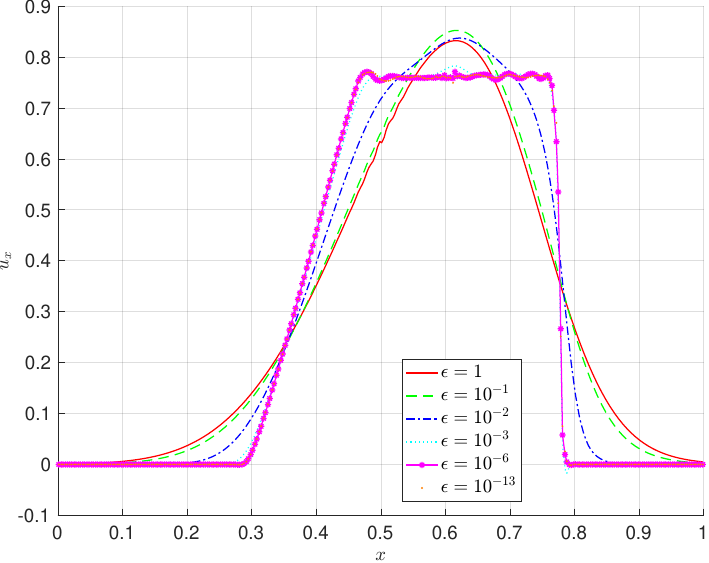}
\end{minipage}
\begin{minipage}[b]{0.32\linewidth}
    \includegraphics[width=\textwidth]{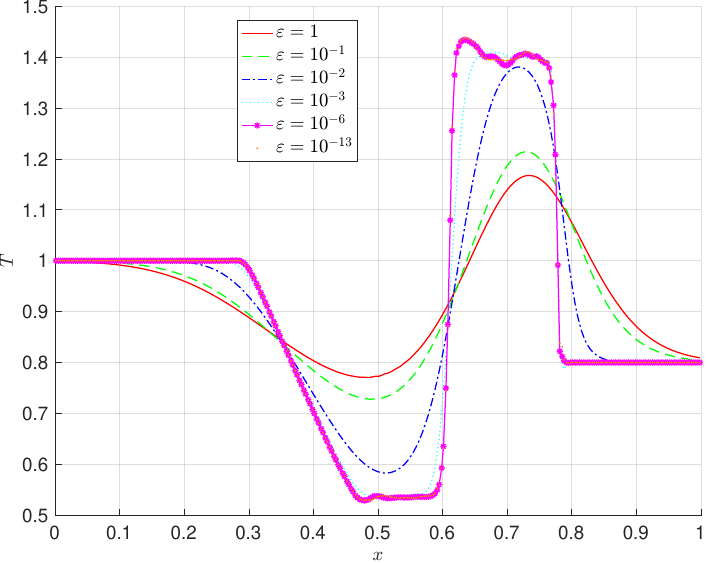}
\end{minipage}
\end{figure}

\begin{figure}[h!]
\caption{Sod shock tube problem using NDG2, WENO limiter, $t_f=0.14$, $N_x=150$, and varying $N_v=\{32,64,128,256,512\}$. The rank in velocity space is plotted at all nodal points in space. Left: $\varepsilon=10^{-3}$. Right: $\varepsilon=10^{-6}$.} 
\label{fig: ranks_Sod_NDG2}
\begin{minipage}[b]{0.47\linewidth}
    \includegraphics[width=\textwidth]{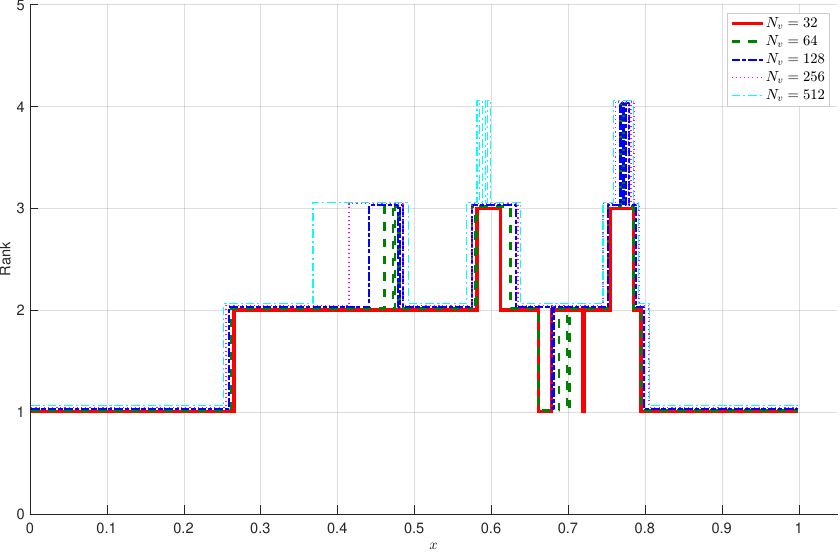}
\end{minipage}
\begin{minipage}[b]{0.05\linewidth}
\end{minipage}
\begin{minipage}[b]{0.47\linewidth}
    \includegraphics[width=\textwidth]{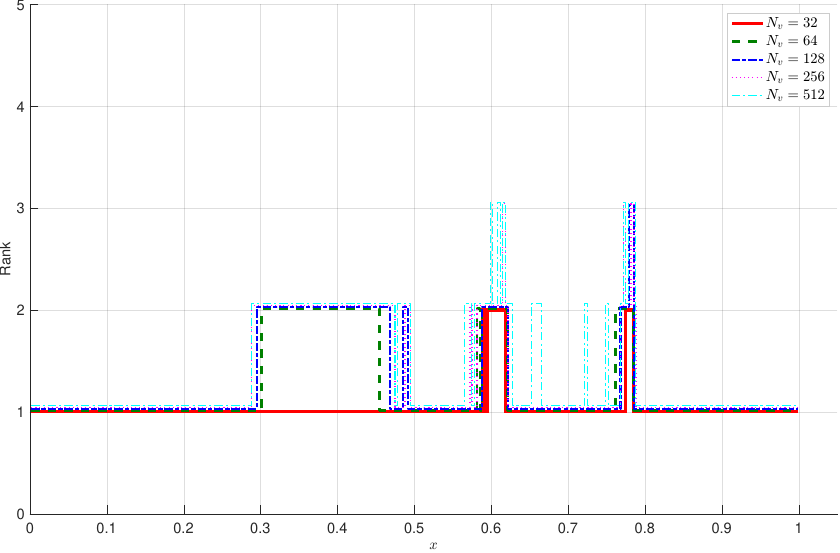}
\end{minipage}
\end{figure}

\subsection{Variable $\varepsilon(x)$ problem}

Following the example presented in \cite{filbet2010class}, we consider a spatially dependent Knudsen number $\varepsilon(x)>0$,
\begin{equation}
    \varepsilon(x)=\varepsilon_0+\frac{1}{2}[\tanh(1-11x)+\tanh(1+11x)],
    \label{eq:var_epsilon}
\end{equation}
and an initial distribution function far from Maxwellian,
\begin{equation}
    f_0(x,\mathbf{v})=\frac{n_0}{2}\left[\exp\left(-\frac{|\mathbf{v}-\mathbf{u}_0|^2}{T_0}\right)+\exp\left(-\frac{|\mathbf{v}+\mathbf{u}_0|^2}{T_0}\right)\right],
    \label{eq:initial_data_variable}
\end{equation}
with $\mathbf{u}_0=(0.75,-0.75)$, $n_0(x)=1+0.5\sin(\omega x)$, and $T_0(x)=0.25+0.1\cos(\omega x)$. The spatial domain $\Omega_x=[-L,L]$ with $L=0.5$ and $\omega=\pi/L$. Periodic boundary conditions are used in physical space $x$, and the velocity domain is taken to be \mbox{$\Omega_v=[-10,10]^2$}.

In Figure \ref{fig: moments_variable_eps}, we show the density $n$, mean velocity $u_x$, and temperature $T$ at times $t=0.25,\,0.5,\,0.75$. We used $\varepsilon_0=10^{-3}$, uniform mesh $N_x=200$ and $N_v=100$, tolerance $\vartheta=10^{-6}$, and IMEX222. A reference solution is also computed by solving with NDG1 using a much finer spatial mesh $N_x=1000$, along with $N_v=100$ and IMEX222. The NDG1 simulation over the coarser mesh had the most numerical diffusion, whereas NDG2 and NDG3 sharply captured the steep gradients and discontinuities. Notably, our results match those presented in \cite{filbet2010class}.

\begin{figure}[h!]
\caption{Mixed regime problem with spatial dependent Knudsen number \eqref{eq:var_epsilon} with $\varepsilon_0=10^{-3}$. $N_x=100$, $N_v=100$. From left to right: time $t=0.25,
\,0.5,\,0.75$. From top to bottom: density $n$,  mean velocity $u_x$, temperature $T$. The WENO limiter is applied. Reference solution obtain with NDG1 with $N_x=1000$, $N_v=100$ and IMEX222.}
\label{fig: moments_variable_eps}
\begin{minipage}[b]{0.32\linewidth}
    \includegraphics[width=\textwidth]{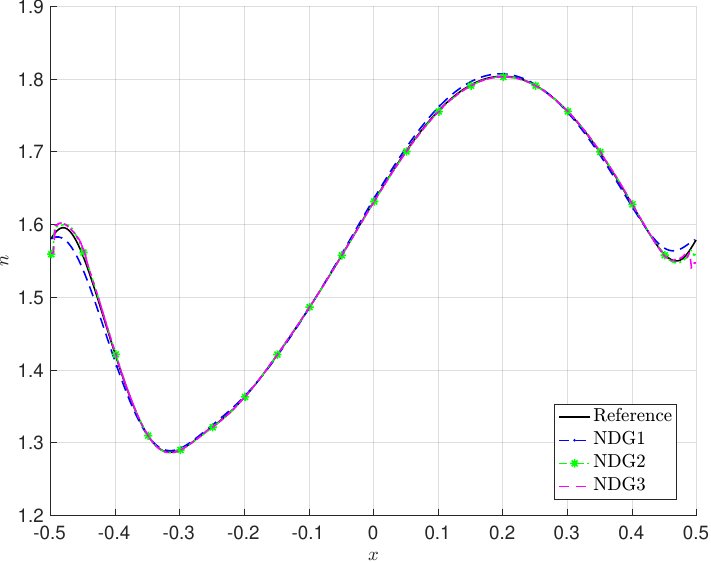}
    \includegraphics[width=\textwidth]{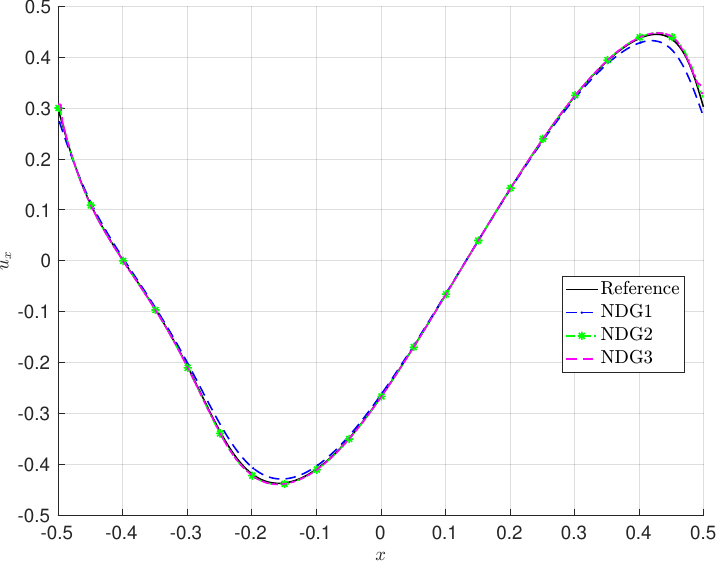}
     \includegraphics[width=\textwidth]{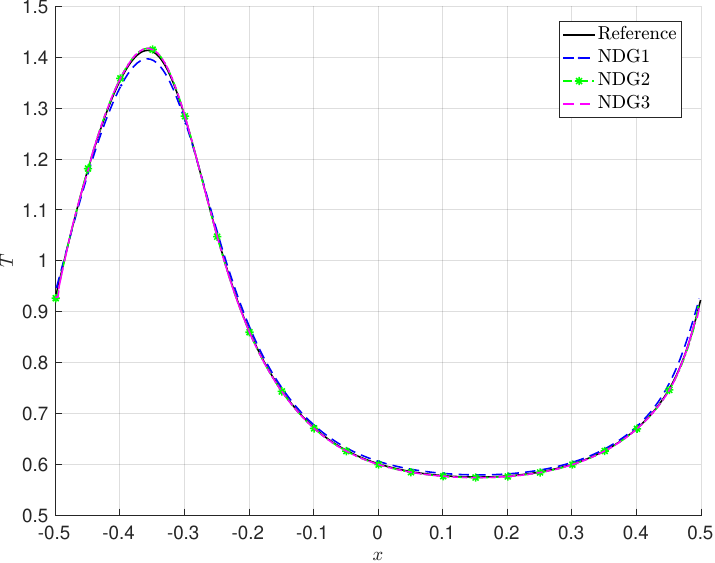}
\end{minipage}
\begin{minipage}[b]{0.32\linewidth}
    \includegraphics[width=\textwidth]{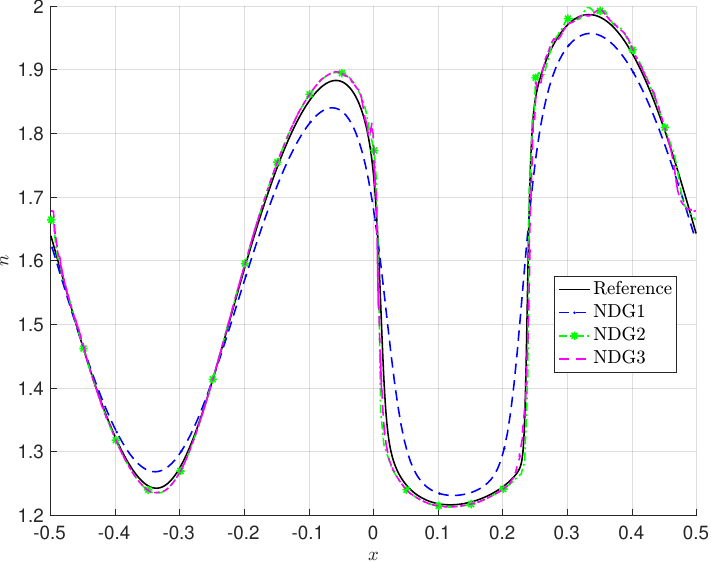}
    \includegraphics[width=\textwidth]{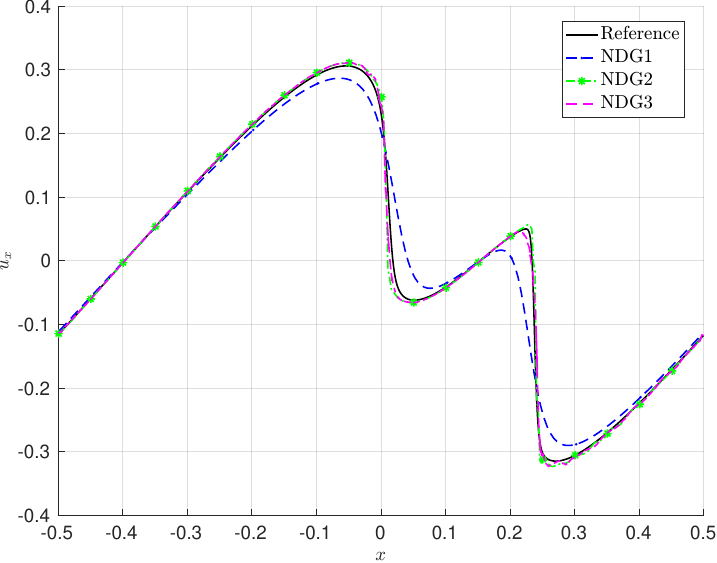}
     \includegraphics[width=\textwidth]{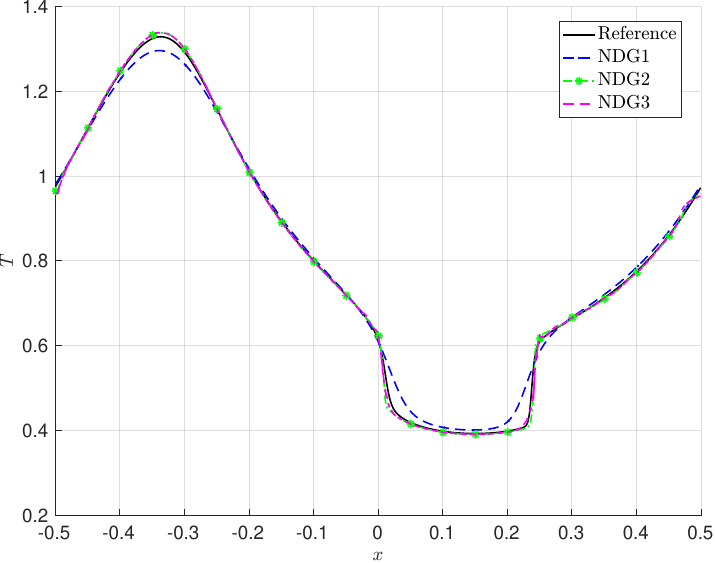}
\end{minipage}
\begin{minipage}[b]{0.32\linewidth}
    \includegraphics[width=\textwidth]{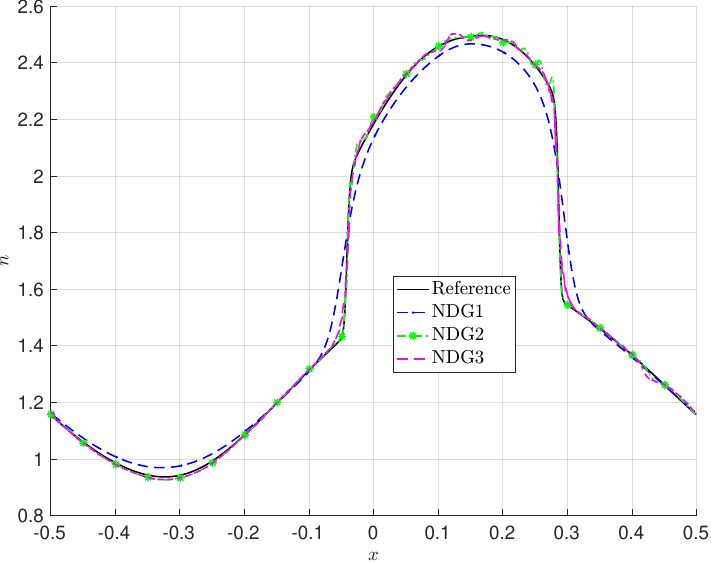}
    \includegraphics[width=\textwidth]{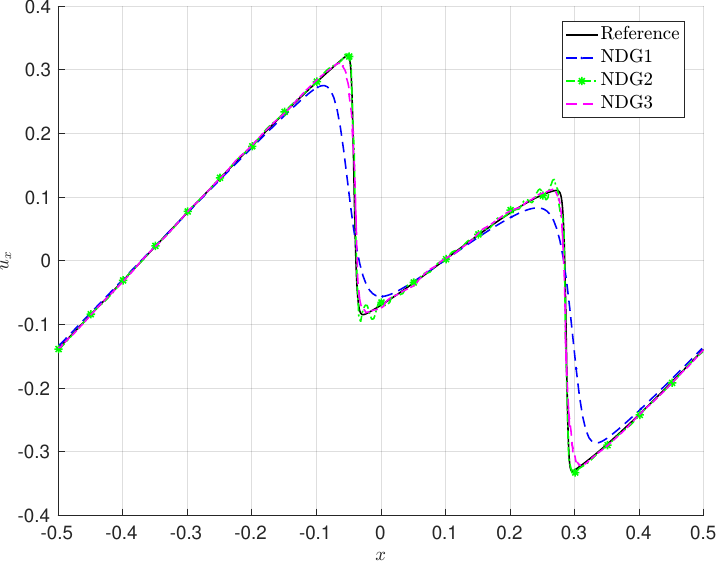}
     \includegraphics[width=\textwidth]{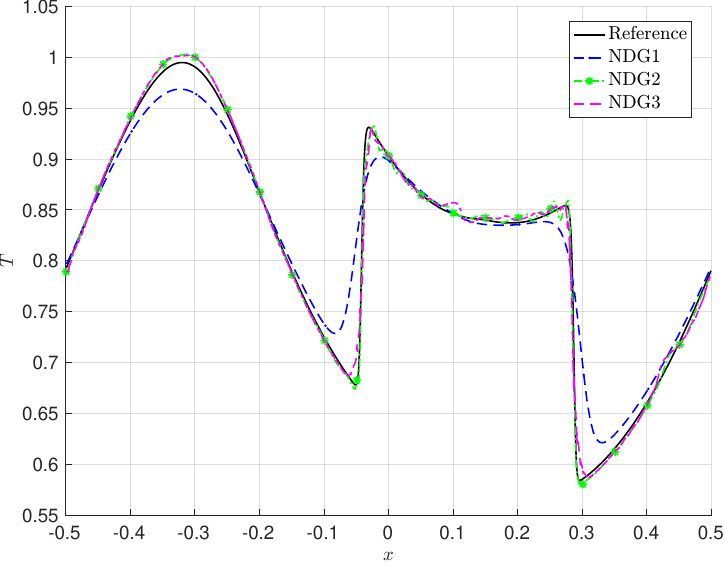}
\end{minipage}

\end{figure}

\section{Conclusion}\label{sec: conclusion}

In this paper, we proposed a nodal DG-IMEX method with low-rank velocity space representation for solving the BGK equation. In doing so, we leverage the high-order accuracy from the nodal DG formulation in physical space, as well as the computational efficiency that comes from working with a low-rank decomposition in velocity space. We further couple this with a second-order IMEX RK method for overall second-order accuracy, although extending to higher-order IMEX RK methods is straightforward. A post-processing step applies slope limiters to the moments at each RK stage to control spurious oscillations. We also show that the numerical solution converges to the corrected Maxwellian distribution in the asymptotic limit as $\varepsilon\rightarrow 0$. Numerical tests verify our method's computational complexity, order of accuracy, and robustness. The current paper works with the 1d2v BGK model, but the proposed method offers a promising approach to break the curse of dimensionality as we extend to higher-dimensional phase space in future work. The following directions will be subject to our future algorithmic and numerical investigations: higher dimensions in phase space with unstructured meshes and slope limiters, rigorous AP property with velocity discretization and compression,and positivity preservation for low-rank solutions.

\subsection{Data Availability} The source code generated and analyzed in the current study is available at \texttt{github.com/afgalindo/1D2VBGK}.

\subsection{Competing interests}
The authors declare that they have no competing interests.


\bibliographystyle{plain}
\bibliography{refs}
\end{document}